\mathchardef\gt="313E  
\mathchardef\lt="313C  
\def\AA#1#2{\ensuremath{\xymatrix{#1\ar[r]^{\catebf{A}_{#1#2}}&#2}}}
\def\EE#1#2{\ensuremath{\xymatrix{#1\ar@{}[rr]^{\catebf{E}_{#1#2}}\ar[r]&\b&#2\ar[l]}}}
\def\II#1#2{\ensuremath{\xymatrix{#1\ar@{}[rr]^{\catebf{I}_{#1#2}}&\b\ar[l]\ar[r]&#2}}}
\def\OO#1#2{\ensuremath{\xymatrix{#1\ar@{}[rrr]^{\catebf{O}_{#1#2}}&\b\ar[l]\ar[r]&\b&#2\ar[l]}}}
\def\AAo#1#2{\ensuremath{\xymatrix{#2&#1\ar[l]_{(\catebf{A}_{#1#2})^{\circ}}}}}
\def\EEo#1#2{\ensuremath{\xymatrix{#2\ar@{}[rr]^{(\catebf{E}_{#1#2})^{\circ}}
\ar[r]&\b&#1\ar[l]}}}
\def\IIo#1#2{\ensuremath{\xymatrix{#2\ar@{}[rr]^{(\catebf{I}_{#1#2})^{\circ}}&\b
\ar[l]\ar[r]&#1}}}
\def\OOo#1#2{\ensuremath{\xymatrix{#2\ar@{}[rrr]^{(\catebf{O}_{#1#2})^{\circ}}\ar[r]&\b&
\b\ar[l]\ar[r]&#1}}}
\def\aa#1#2{\ensuremath{\catebf{A}_{#1#2}}}
\def\ee#1#2{\ensuremath{\catebf{E}_{#1#2}}}
\def\ii#1#2{\ensuremath{\catebf{I}_{#1#2}}}
\def\oo#1#2{\ensuremath{\catebf{O}_{#1#2}}}
\theoremstyle{definition}
\newtheorem{theorem}{Theorem}[section]
\newtheorem{lemma}[theorem]{Lemma}
\newtheorem{corollary}[theorem]{Corollary}
\newtheorem{definition}[theorem]{Definition}
\newtheorem{example}[theorem]{Example}
\title{A diagrammatic calculus of $n$-term syllogisms}
\author{Ruggero Pagnan\\
DISI, University of Genova, Italy\\
\texttt{ruggero.pagnan@disi.unige.it}}
\date{}
\begin{document}
\maketitle
\begin{abstract}
We extend the diagrammatic calculus of syllogisms introduced
in~\cite{Pagnan1} to the general case of $n$-term syllogisms, showing
that the valid ones are exactly those whose conclusion
follows by calculation. Moreover, by pointing out the existing
connections with the theory of rewriting systems we will also single out a suitable
category theoretic framework fo the calculus.\\
\linebreak
\textbf{MSC:} 03B99, 18A15. \textbf{Keywords:} syllogism, syllogistic
inference, rewriting system, rewrite rule, polygraph.
\end{abstract}
%
\def\a{\ensuremath{(a)}}
\def\adj#1#2{\ensuremath{\xymatrix@C.2cm{#1\ar@{-|}[r]&#2}}}
%
\def\adjpair#1#2#3#4#5{\ensuremath{\xymatrix{#1\ar@/^/[r]^{#3}_{\hole}="1"&#2\ar@/^/[l]^{#4}_{\hole}="2" \ar@{} "2";"1"|(.3){#5}}}}
%
\def\and{\ensuremath{\wedge}}
%
\def\arr{\ensuremath{\rightarrow}}
%
\def\Arr{\ensuremath{\Rightarrow}}
%
%
%
\def\arrow#1{\ensuremath{\cateb{#1}^{\arr}}}
%
\def\as{\ensuremath{\ast}}
%
%
\def\b{\ensuremath{\bullet}}
%
\def\BC{\ensuremath{(BC)}}
%
\def\BCd{\ensuremath{(BC)_d}}
\def\beps{\ensuremath{\backepsilon}}
%
\def\bicat#1{\ensuremath{\mathcal{#1}}}
%
\def\bf#1{\ensuremath{\mathbf{#1}}}
%
\def\bpararrow#1#2#3#4{\ensuremath{\xymatrix{#1\ar@<.9ex>[rr]^{#3}\ar@<-.9ex>[rr]_{#4}&&#2}}}
%
\def\bsquare#1#2#3#4#5#6#7#8{\xymatrix{#1\ar[dd]_{#8}\ar[rr]^#5&&#2\ar[dd]^{#6}\\
\\
#3\ar[rr]_{#7}&&#4}}
\def\bu{\ensuremath{\bullet}}
\def\btoind#1{#1\index{#1@\textbf{#1}}}
%
\def\card#1{\ensuremath{|#1|}}
\def\cate#1{\ensuremath{\mathcal{#1}}}
\def\cateb#1{\ensuremath{\mathbbm{#1}}}
%
\def\catebf#1{\ensuremath{\mathbf{#1}}}
\def\catepz#1{\ensuremath{\mathpzc{#1}}}
%
\def\cateop#1{\ensuremath{\cate#1^{op}}}
\def\comparrow#1#2#3#4#5#6{\ensuremath{\xymatrix{#6#1\ar[r]#4&#2\ar[r]#5&#3}}}
\def\cons#1{\ensuremath{\newdir{|>}{%
!/4.5pt/@{|}}\xymatrix@1@C=.3cm{\ar@{|=}[r]_{#1}&}}}
\def\D{\ensuremath{(D)}}
\def\enunciato#1#2#3#4#5#6#7{\newtheorem{#4}#5{#1}#6                                              
\begin{#4}#2\label{#7}
#3
\end{#4}}
%
\def\dfn#1#2#3#4#5#6#7{\newtheorem{#4}#5{#1}#6                                              
\begin{#4}#2\label{#7}
\emph{#3}
\end{#4}}
%
\def\epiarrow#1#2#3#4{\ensuremath{\newdir{|>}{%
!/4.5pt/@{|}*:(1,-.2)@^{>}*:(1,+.2)@_{>}}\xymatrix{#3#1\ar@{-|>}[r]#4&#2}}}
\def\epitip{\ensuremath{\newdir{|>}{%
!/4.5pt/@{|}*:(1,-.2)@^{>}*:(1,+.2)@_{>}}}}
\def\eps#1{\ensuremath{\epsilon#1}}
\def\esempio#1#2#3#4#5#6#7{\newtheorem{#4}#5{#1}#6                                              
\begin{#4}#2\label{#7}
\textup{#3}
\begin{flushright}
$\blacksquare$
\end{flushright}
\end{#4}}
%
\def\et#1{\ensuremath{\eta#1}}
\def\Ex#1{\mbox{\boldmath\ensuremath{\exists}}{#1}}
\def\ex#1{\ensuremath{\exists#1}}
\def\F{\ensuremath{(F)}}
\def\farrow#1#2#3{\ensuremath{\xymatrix{#3:#1\ar[r]&#2}}}
\def\Fi{\ensuremath{\Phi}}
%
\def\fib#1#2#3{\ensuremath{
\catebf{#3}:\cateb{#1}\arr\cateb{#2}}}
%
%
\def\fibs#1#2#3#4{\ensuremath{\begin{array}{ll}
\slice{\catebf{#1}}{#2}\\
\,\downarrow^{\catebf{#4}}\\
\catebf{#3}
\end{array}}}
\def\fibss#1#2#3#4#5{\ensuremath{\begin{array}{ll}
\slice{\catebf{#1}}{#2}\\
\,\downarrow^{#5}\\
\slice{\catebf{#3}}{#4}
\end{array}}}
\def\forev#1{\ensuremath{\forall~#1,~\forall~}}
\def\fract#1{\ensuremath{\cateb{#1}[\Sigma^{-1}]}}
%
\def\G#1{\ensuremath{\int{#1}}}
%
%
\def\harrow{\ensuremath{\rightharpoonup}}
%
\def\hook{\ensuremath{\hookrightarrow}}
\def\hset#1#2#3{\ensuremath{\cate{#1}(#2,#3)}} 
\def\Im#1{\ensuremath{\mathit{Im}#1}}
\def\implies{\ensuremath{\Rightarrow}}
\def\indprod#1#2{\ensuremath{\prod#1#2}}
%
\def\infer#1#2{\ensuremath{\left.\begin{array}{cc}
#1\\
\xymatrix{\ar@{-}@<-.3ex>[rrrr]\ar@{-}@<.3ex>[rrrr]&&&&}\\
#2\\
\end{array}\right.}}
%
%
\def\it#1{\ensuremath{\mathit{#1}}}
%
\def\l#1{\ensuremath{#1_{\bot}}}
\def\larrow{\ensuremath{\leftarrow}}
\def\lcomparrow#1#2#3#4#5{\ensuremath{\xymatrix{#1\ar[rr]#4&&#2\ar[rr]#5&&#3}}}
\def\lfract#1{\ensuremath{[\Sigma^{-1}]\cateb{#1}}}
%
\def\lhookarrow#1#2#3{\ensuremath{\xymatrix{#1\ar@{^{(}->}[rr]^{#3}&&#2}}}
\def\mod#1#2{\ensuremath{\cateb{Mod}(#1,#2)}}
\def\Mono#1{\ensuremath{\mathit{Mono}#1}}
\def\monoarrow#1#2#3#4{\ensuremath{\newdir{
>}{{}*!/-5pt/@{>}}\xymatrix{#3#1\ar@{ >->}[r]#4&#2}}}
\def\monotail{\ensuremath{\newdir{
>}{{}*!/-5pt/@{>}}}}
\def\N{\ensuremath{(N)}}
\def\natarrow#1#2#3{\ensuremath{\newdir{|>}{%
!/4.5pt/@{|}*:(1,-.2)@^{>}*:(1,+.2)@_{>}}\xymatrix{#3:#1\ar@{=|>}[r]&#2}}}
\def\natbody{\ensuremath{\newdir{|>}{%
!/4.5pt/@{|}*:(1,-.2)@^{>}*:(1,+.2)@_{>}}}}
\makeatletter
\def\natpararrow#1#2#3#4#5{\@ifnextchar(
 {\Natpararrow{#1}{#2}{#3}{#4}{#5}}
 {\Natpararrow{#1}{#2}{#3}{#4}{#5}(.5)}}
\makeatother
\def\Natpararrow#1#2#3#4#5(#6){\ensuremath{\newdir{|>}{%
!/4.5pt/@{|}*:(1,-.2)@^{>}*:(1,+.2)@_{>}}
\xymatrix{#1\ar@<1.5ex>[rr]^(#6){#3}|(#6){\vrule height0pt depth.7ex
width0pt}="a"\ar@<-1.5ex>[rr]_(#6){#4}|(#6){\vrule height1.5ex width0pt}="b"&&#2
\ar@{=|>} "a";"b"#5}}}
\def\om{\ensuremath{\omega}}
%
\def\ov#1{\ensuremath{\overline{#1}}}
%
\def\ovarr#1{\ensuremath{\overrightarrow{#1}}}
%
\def\pararrow#1#2#3#4{\ensuremath{\xymatrix{#1\ar@<.8ex>[r]^{#3}\ar@<-.8ex>[r]_{#4}&#2}}}
\def\p#1{\ensuremath{\mathds{P}_{\catepz{S}}}(#1)}
\def\P#1{\ensuremath{\mathnormal{P}(#1)}}
\def\pr#1{\textbf{Proof:~}#1 
\begin{flushright}
$\Box$
\end{flushright}
}
%
%
%
%
\def\rel#1#2#3{\ensuremath{\xymatrix{#1:#2\ar[r]
|-{\SelectTips{cm}{}\object@{|}} &#3}}}
%
%
\def\res#1#2{\ensuremath{#1_{\rbag #2}}}
\def\S#1{\ensuremath{\catepz{S}_{\cateb{#1}}}}
\def\sectoind#1#2{#1\index{#2!#1}}
%
\def\sinfer#1#2{\ensuremath{\left.\begin{array}{cc}
#1\\
\xymatrix{\ar@{-}[rrrr]&&&&}\\
#2\\
\end{array}\right.}}
%
\def\slice#1#2{\ensuremath{#1/{\textstyle#2}}}
\def\sslice#1#2{\ensuremath{#1//{\textstyle#2}}}
\def\square#1#2#3#4#5#6#7#8{$$\xymatrix{#1\ar[d]_{#8}\ar[r]^{#5}&#2\ar[d]^{#6}\\
#3\ar[r]_{#7}&#4}$$}
\def\sub#1#2#3{\ensuremath{#1(#2#3)~}}
\def\summ#1{\ensuremath{\sum#1}}
%
\def\ti#1{\ensuremath{\tilde#1}}
%
\def\tens{\ensuremath{\otimes}}
%
\def\teo#1{\ensuremath{\tau#1}}
\def\ter#1{\ensuremath{\tau_{\cate{#1}}}}
\def\teta{\ensuremath{\theta}}
%
%
\def\toind#1{#1\index{#1}}
%
\def\un#1{\underline{#1}}
\def\vcell{\ensuremath{\ar@<-1ex>@{}[r]_{\hole}="a"\ar@<+1ex>@{}[r]^{\hole}="b"}}
%
\def\veps{\ensuremath{\varepsilon}}
%
\def\vfi{\ensuremath{\varphi}}
\def\w{\ensuremath{\wedge}}
\def\wti#1{\ensuremath{\widetilde#1}}
\def\y#1{\ensuremath{y#1}}

%
\section{Introduction}
The main aims of the present paper are on one hand that of extending to 
the $n$-term case the diagrammatic calculus of syllogisms 
introduced in~\cite{Pagnan1}, where we
dealt with the basic $3$-term case and on the other hand that of 
single out a suitable category theoretic framework for the calculus itself.
To the author's knowledge, another diagrammatic approach based
on directed graphs already exists, see~\cite{Smyth}, whereas for a
category theoretic point of view, the reader may
consult~\cite{MR1271697}.\\
In section~\ref{prelsyll} we briefly recall the basics on syllogisms
and the the diagrammatic calculus we hinted at above. 
In section~\ref{nsyll}, we will deal with $n$-term
syllogisms and prove that the calculus extends to them, by showing in
turn that the valid $n$-term syllogisms are exactly those whose
conclusion follows from their 
premisses by calculation. Moreover, we will also retrieve the well-known result that
the valid $n$-term syllogisms are $3n^2-n$. In section~\ref{just}, we
will point out the existing connections with the theory of rewriting
systems, by approaching them through polygraphs, mainly referring
to~\cite{MR1224519}.
\section{Preliminaries on syllogisms}\label{prelsyll}
We will refer to nouns, adjectives or more
complicated expressions of the natural language as to terms,
generically, and denote them by using upper case letters which we
will also call \emph{term-variables}.\\
The first systematization of syllogistic is due to Aristotle. Since him
the following four kinds of propositions were
recognized as fundamental throughout the research in logic:
\begin{eqnarray*}
&&\mbox{$\catebf{A}_{AB}$: All A is B (universal affirmative proposition)}\\
&&\mbox{$\catebf{E}_{AB}$: No A is B (universal negative proposition)}\\
&&\mbox{$\catebf{I}_{AB}$: Some A is B (particular affirmative proposition)}\\
&&\mbox{$\catebf{O}_{AB}$: Some A is not B (particular negative proposition)}
\end{eqnarray*}
Following the tradition that is, loosely, the medieval systematization of syllogistic,
we will henceforth refer them to as 
\emph{categorical propositions}. In each of them, $A$ denotes the
\emph{subject} whereas $B$ denotes the \emph{predicate} of the
corresponding proposition.\\
A \emph{syllogism} is a rule of inference that involves three categorical
propositions that are distinguished by referring them to as
\emph{first premise}, \emph{second premise} and \emph{conclusion}.
Moreover, a syllogism involves exactly three term-variables $S$, $P$ and
$M$ in the following precise way: $M$ does not
occur in the conclusion whereas, according to the tradition, $P$ occurs in the first premise and 
$S$ occurs in the second premise. The term-variables $S$ and $P$ 
occur as the subject and predicate of the conclusion, respectively,
and are also referred to as \emph{minor term} and \emph{major term} of
the syllogism, whereas $M$ is also referred to as \emph{middle
term}.\\
The \emph{mood} of a syllogism is the sequence of the kinds of 
categorical propositions by which it is formed, whereas its
\emph{figure} is the position of the term-variables $S$, $P$ and $M$ in it.
There are four possible figures, as shown in the table
\begin{eqnarray}\label{figs}
\begin{tabular}{|l|c|c|c|c|}
\hline
& fig. 1 & fig. 2 & fig. 3 & fig. 4\\
\hline
first premise & MP & PM & MP & PM\\
\hline
second premise & SM & SM & MS & MS\\
\hline
conclusion & SP & SP & SP & SP\\
\hline
\end{tabular}
\end{eqnarray}
and a syllogism is completely determined by its mood and by its figure
together. We write syllogisms so that their mood and figure can be
promptly retrieved and let the symbol $\models$
separate the premisses from the conclusion.
For example, in the syllogism
\begin{eqnarray}\label{sylloprimo}
\catebf{A}_{MP},\catebf{A}_{SM}\models\catebf{A}_{S P}
\end{eqnarray}
it is possible to recognize from left to right the first premise, the
second premise and the conclusion, moreover the mood, which is
\catebf{AAA}, and the figure which is the first one. The combination
of the moods and figures gives rise to 256 syllogisms in total, of
which only 24 are \emph{valid}, that is such that the
verification of the premisses necessarily entails the verification of
the conclusion. Venn diagrams
can be used to verify the validity of syllogisms, see~\cite{MR0037255}
for example. We hasten to say that 
of the 24 valid syllogisms, 9 are valid under suitable additional
assumptions and will be henceforth referred to
as \emph{syllogisms with assumption of existence} for reasons that
will be cleared later on, whereas the remaining 15 are valid 
without any further assumption and in the present section we continue refer them to as
syllogisms, simply. These are the ones listed in the table
\begin{eqnarray}\label{valsyll}
\begin{tabular}{|c|c|c|c|}
\hline
fig. 1 & fig. 2 & fig. 3 & fig. 4\\
\hline
\catebf{AAA}&\catebf{EAE}&\catebf{IAI}&\catebf{AEE}\\
\catebf{EAE}&\catebf{AEE}&\catebf{AII}&\catebf{IAI}\\
\catebf{AII}&\catebf{EIO}&\catebf{OAO}&\catebf{EIO}\\
\catebf{EIO}&\catebf{AOO}&\catebf{EIO}&\\
\hline
\end{tabular}
\end{eqnarray}

Now, for the previously cited diagrammatic calculus of syllogisms,
graphical representations of the categorical
propositions are correspondingly given, that is
\[\AA{A}{B}\qquad\EE{A}{B}\]
\[\II{A}{B}\qquad\OO{A}{B}\]
which will be henceforth referred to as
\emph{Aristotelian diagrams}. Each of them
has a corresponding dual, namely
\[\AAo{B}{A}\qquad\EEo{B}{A}\]
\[\IIo{B}{A}\qquad\OOo{B}{A}.\]
Two or more Aristotelian diagrams, and their duals, can be
concatenated and reduced, if possible. In such a concatenation, a
reduction applies by formally composing two or more consecutive
and accordingly oriented arrow symbols separated by a single
term-variable, thus deleting it. Such a reduction will be
henceforth referred to as \emph{syllogistic inference}. By means
of syllogistic inferences, Aristotelian diagrams can be used to
verify the validity of syllogisms. This is obtained by using three
Aristotelian diagrams, as the first premise, the second premise,
and the conclusion of the syllogism. Moreover, these involve three
distinguished term-variables, denoted $S$, $P$ and $M$, in such a
way that $M$ occurs in both the Aristotelian diagrams in the
premisses and does not in the conclusion, whereas $S$ and $P$
occur in the conclusion as well as in the premisses. Following
the tradition, $P$ will occur in the first premise whereas $S$ in the
second. Syllogistic inferences will be represented by diagrams filled in
with the symbol $\models$ upside down, so to explicitly underline
the fact that the notion of syllogistic inference is a directed
one but also written in line, by letting $\sharp$ denote the
concatenation of Aristotelian diagrams. 
Thus, for example, the syllogistic inference associated with
the valid syllogisms~(\ref{sylloprimo}) can be either diagrammatically
represented as
\[\xymatrix@R=1ex{S\ar@{=}[d]\ar@{}[drr]|{\begin{turn}{270}$\models$\end{turn}}
\ar[r]^{\aa{S}{M}}&M\ar[r]^{\aa{M}{P}}&P\ar@{=}[d]\\
S\ar[rr]_{\aa{S}{P}}&&P}\]
or written as
\[(\aa{M}{P})\sharp(\aa{S}{M})\models(\aa{S}{P}).\]
Validity of syllogism 
\begin{eqnarray}\label{syllat}
\catebf{A}_{PM},\catebf{E}_{MS}\models\catebf{E}_{SP}
\end{eqnarray}
is witnessed by the existence of a syllogistic inference
reducing the concatenation of the Aristotelian diagrams for
its premisses to the Aristotelian diagram for its
conclusion. The concatenation of the Aristotelian
diagrams for the premisses of~(\ref{syllat}) is 
\[(\catebf{A}_{PM})^{\circ}\sharp(\catebf{E}_{MS})^{\circ}\]
that is
\[\xymatrix{S\ar@{}[rr]^{(\ee{M}{S})^{\circ}}\ar[r]&\b&M\ar[l]&P\ar[l]_{(\aa{P}{M})^{\circ}}}\]
whereas in its entirety
the syllogistic inference can be written as 
\begin{eqnarray}\label{syllinf}
(\catebf{A}_{PM})^{\circ}\sharp(\catebf{E}_{MS})^{\circ}\models(\catebf{E}_{SP})
\end{eqnarray}
or represented by the diagram
\begin{eqnarray}\label{firstex}
\xymatrix@R=1ex{S\ar@{}[rr]^{(\ee{M}{S})^{\circ}}
\ar@{}[drrr]|{\begin{turn}{270}$\models$\end{turn}}
\ar@{=}[d]\ar[r]&\b&M\ar[l]&P\ar[l]_{(\aa{P}{M})^{\circ}}\ar@{=}[d]\\
S\ar@{}[rrr]_{\ee{S}{P}}\ar[r]&\b&&P\ar[ll]}
\end{eqnarray}
to produce evidence for~(\ref{syllinf}), since in~(\ref{firstex}) 
the Aristotelian diagram representing the conclusion of~(\ref{syllat})
has been obtained by reduction through the formal calculation of the composite 
$\b\leftarrow M\leftarrow P$, making the middle term $M$ disappear.
We hasten to remark that the unlabelled version of 
diagram~(\ref{firstex}), namely diagram
\begin{eqnarray}\label{firstexx}
\xymatrix@R=1ex{S\ar@{}[drrr]|{\begin{turn}{270}$\models$\end{turn}}
\ar@{=}[d]\ar[r]&\b&M\ar[l]&P\ar[l]\ar@{=}[d]\\
S\ar[r]&\b&&P\ar[ll]}
\end{eqnarray}
does not uniquely determine the
syllogistic inference~(\ref{syllinf}), since it must be taken into
account that the same mood can occur in more than one figure, as
clearly shown by table~(\ref{valsyll}). In general, 
the unlabelled diagram of a syllogistic inference 
determines the mood of a syllogism only up to figure. For instance, 
diagram~(\ref{firstexx}) produces evidence for 
the syllogistic inference
\[(\catebf{A}_{PM})^{\circ}\sharp(\catebf{E}_{SM})\models(\catebf{E}_{SP})\]
by relabelling it as
\[\xymatrix@R=1ex{S\ar@{}[rr]^{\ee{S}{M}}\ar@{}[drrr]|{\begin{turn}{270}$\models$\end{turn}}
\ar@{=}[d]\ar[r]&\b&M\ar[l]&P\ar[l]_{(\aa{P}{M})^{\circ}}\ar@{=}[d]\\
S\ar@{}[rrr]_{\ee{S}{P}}\ar[r]&\b&&P\ar[ll]}\]
thus validating the mood $\catebf{AEE}$ in the second figure,
namely the syllogism
\[\catebf{A}_{PM},\catebf{E}_{SM}\models\catebf{E}_{SP}\]
whereas the syllogistic inference~(\ref{syllinf}) was validating the
mood $\catebf{AEE}$ in the fourth figure. 
Now, we let the reader convince herself that suitable labellings of  the diagram
\[\xymatrix@R=1ex{S\ar@{=}[d]\ar@{}[drrrr]|{\begin{turn}{270}$\models$\end{turn}}&\b\ar[l]\ar[r]&
M\ar[r]&\b&P\ar[l]\ar@{=}[d]\\
S&\b\ar[l]\ar[rr]&&\b&P\ar[l]}\]
produce evidence for the syllogistic inferences
\[(\catebf{E}_{MP})\sharp(\catebf{I}_{SM})\models(\catebf{O}_{SP})\]
\[(\catebf{E}_{PM})^{\circ}\sharp(\catebf{I}_{SM})\models(\catebf{O}_{SP})\]
\[(\catebf{E}_{MP})\sharp(\catebf{I}_{MS})^{\circ}\models(\catebf{O}_{SP})\]
\[(\catebf{E}_{PM})^{\circ}\sharp(\catebf{I}_{MS})^{\circ}\models(\catebf{O}_{SP})\]
validating the mood \catebf{EIO} in all the
figures.\\

A feature of the calculus at issue is that in a syllogistic inference, no bullet
symbol gets deleted. More precisely, for a valid syllogism,
the Aristotelian diagram for its conclusion contains as
many bullet symbols as in the Aristotelian diagrams for its
premisses. This fact turns out to be useful in showing that a
syllogism is not valid. For example, the syllogism
\begin{eqnarray}\label{notv}
\catebf{O}_{PM},\catebf{E}_{MS}\models\catebf{I}_{SP}
\end{eqnarray}
is not valid since if it were such, then the existence of the
syllogistic inference
\[(\catebf{O}_{PM})^{\circ}\sharp(\catebf{E}_{MS})^{\circ}\models(\catebf{I}_{SP})\]
would be witnessed by a diagram such as 
\begin{eqnarray}\label{previous}
\xymatrix@R=1ex{S\ar@{}[drrrrr]|{\begin{turn}{270}$\models$\end{turn}}
\ar@{}[rr]^{(\ee{M}{S})^{\circ}}\ar@{=}[d]\ar[r]&\b&
M\ar@{}[rrr]^{(\oo{P}{M})^{\circ}}\ar[l]\ar[r]&\b&\b\ar[l]\ar[r]&P\ar@{=}[d]\\
S\ar@{}[rrrr]_{\ii{S}{P}}&\b\ar[l]\ar[rrrr]&&&&P}
\end{eqnarray}
which fact is impossible since a single bullet symbol occurs in the
Aristotelian diagram for the 
conclusion, whereas three of them occur in those
for the premisses. This fact could be observed
even without drawing the previous diagram but by simply looking
at~(\ref{notv}). However, this criterion not always apply. It suffices to
consider the syllogistic inference
\[\xymatrix@R=1ex{S\ar@{=}[d]\ar@{}[drrrr]|{\begin{turn}{270}$\models$\end{turn}}
\ar[r]&\b&M\ar[l]&\b\ar[r]\ar[l]&P\ar@{=}[d]\\
S\ar[r]&\b&&\b\ar[ll]\ar[r]&P}\]
in which as many bullet symbols occur in the premisses as in the
conclusion, that we could be tempted to label as
$(\oo{P}{S})^{\circ}$, but doing this
would mean the interchanging of the r\^oles played by the
term-variables $S$ and $P$. On the other hand,
syllogism~(\ref{notv}) is not valid even because in
diagram~(\ref{previous}) $M$ is not erasable.\\

For every term-variable $A$, particularly interesting
instances of Aristotelian diagrams are the following:
\[\AA{A}{A}\qquad\EE{A}{A}\]
\[\II{A}{A}\qquad\OO{A}{A}\]
which have to be read as
\begin{eqnarray*}
&&\mbox{$\catebf{A}_{AA}$: All A is A}\\
&&\mbox{$\catebf{E}_{AA}$: No A is A}\\
&&\mbox{$\catebf{I}_{AA}$: Some A is A}\\
&&\mbox{$\catebf{O}_{AA}$: Some A is not A}
\end{eqnarray*}
respectively. The diagrams $\catebf{A}_{AA}$ and $\catebf{I}_{AA}$ are
referred to as \emph{laws of identity}, see~\cite{Lukasiewicz}.
In particular, $\catebf{I}_{AA}$ will be referred to as an
\emph{assumption of existence}, since it
affirms the inhabitation of $A$ whereas, on the contrary, $\catebf{E}_{AA}$
affirms its emptyness. The diagram $\catebf{O}_{AA}$ is an expression of
the \emph{principle of contradiction}, which fact
has been discussed in~\cite{Pagnan1}, to which we refer the interested
reader.\\

A \emph{syllogism with assumption of existence} is a syllogism 
that is valid under an additional assumption of existence of the form $\catebf{I}_{SS}$,
$\catebf{I}_{MM}$ or $\catebf{I}_{PP}$. The table 
\begin{eqnarray}\label{valhypsyll}
\begin{tabular}{|c|c|c|c|l|}
\hline
fig. 1 & fig. 2 & fig. 3 & fig. 4 & assumption\\
\hline
\catebf{AAI}&\catebf{AEO}&&\catebf{AEO}& $\catebf{I}_{SS}$\\
%
%
\catebf{EAO}&\catebf{EAO}&&& $\catebf{I}_{SS}$\\
%
%
&&\catebf{AAI}&\catebf{EAO}& $\catebf{I}_{MM}$\\
%
%
&&\catebf{EAO}&& $\catebf{I}_{MM}$\\
%
%
&&&\catebf{AAI}& $\catebf{I}_{PP}$\\
\hline
\end{tabular}
\end{eqnarray}
lists the valid syllogisms with assumption of existence. For instance,
in order to show that the syllogism with assumption of existence
\[\catebf{E}_{MP},\catebf{A}_{SM},\catebf{I}_{SS}\models\catebf{O}_{SP}\]
is valid, it suffices to consider the syllogistic inference 
\[(\catebf{E}_{MP})\sharp(\catebf{A}_{SM})\sharp(\catebf{I}_{SS})\models(\catebf{O}_{SP})\]
witnessed by diagram
\[\xymatrix@R=1ex{S\ar@{}[rr]^{\ii{S}{S}}\ar@{}[drrrrr]|{\begin{turn}{270}$\models$\end{turn}}
\ar@{=}[d]&\b\ar[r]\ar[l]&S\ar[r]^{\aa{S}{M}}&M\ar@{}[rr]^{\ee{M}{P}}
\ar[r]&\b&P\ar[l]\ar@{=}[d]\\
S\ar@{}[rrrrr]_{\oo{S}{P}}&\b\ar[l]\ar[rrr]&&&\b&P\ar[l]}\]
whose unlabelled version 
produces evidence for the syllogistic
inference
\[(\catebf{E}_{PM})^{\circ}\sharp(\catebf{A}_{SM})\sharp(\catebf{I}_{SS})\models(\catebf{O}_{SP})\]
too, validating the syllogism with assumption of existence
\[\catebf{E}_{PM},\catebf{A}_{SM},\catebf{I}_{SS}\models\catebf{O}_{SP}.\]

We end the section by citing
\begin{theorem}\label{main}
A syllogism (with assumption of existence) is valid if and only if
there is a necessarily unique syllogistic inference from its premisses to its conclusion.
\end{theorem}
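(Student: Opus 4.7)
My plan is to prove both directions of the biconditional separately: soundness (existence of a syllogistic inference implies validity) via a semantic reading of Aristotelian diagrams, and completeness (validity implies existence) via exhaustive case analysis over the finitely many valid syllogisms listed in tables~(\ref{valsyll}) and~(\ref{valhypsyll}). Uniqueness will follow as a corollary of the fact that, once mood and figure are fixed, both the diagrams participating in the inference and the only admissible reduction pattern are determined.

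For the soundness direction, I would read $\aa{A}{B}$ as the existence of a total map from $A$ to $B$, $\ii{A}{B}$ as a bullet witnessing a common element of $A$ and $B$, $\ee{A}{B}$ as the vacuity of any such common witness through an intermediate empty object, and $\oo{A}{B}$ as an \emph{I}-witness of $A$ together with an \emph{E}-assertion separating it from $B$. Under this interpretation, each reduction step that deletes a shared term-variable — composing two consecutive arrows, or absorbing an arrow into a span — is semantically sound, because it corresponds respectively to composition of maps, pullback of witnesses, or transitive propagation of emptiness. Consequently, any syllogistic inference from the premises of a syllogism to its conclusion is a valid deduction.

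For completeness and uniqueness, I would enumerate the 15 unconditionally valid syllogisms in table~(\ref{valsyll}) and the 9 conditionally valid ones in table~(\ref{valhypsyll}), exhibiting in each case an explicit syllogistic inference. Many cases are handled simultaneously by the relabelling technique already demonstrated in the text for \catebf{AEE} in figures~2 and~4 and for \catebf{EIO} in all four figures; the remaining moods \catebf{AAA}, \catebf{EAE}, \catebf{IAI}, \catebf{AII}, \catebf{OAO}, \catebf{AOO} reduce similarly, from a small number of unlabelled templates. For the conditional moods \catebf{AAI}, \catebf{EAO}, \catebf{AEO} the concatenation includes the appropriate assumption of existence diagram $\catebf{I}_{SS}$, $\catebf{I}_{MM}$ or $\catebf{I}_{PP}$, whose surviving bullet becomes the bullet of the conclusion, exactly as in the worked example just before the theorem. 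Uniqueness then follows because, once mood and figure are fixed, both the Aristotelian diagrams for the premises and the orientations of their arrows are fixed, and the only term-variable available for cancellation is the middle term $M$ (together, in the conditional case, with the $S$ or $P$ supplied by the assumption of existence, but only at one forced position), so the reduction is unique.

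The main obstacle will be the \emph{other} half of completeness, namely showing that the 232 non-valid syllogisms admit no syllogistic inference whatsoever. The bullet-counting invariant — the fact that reduction never deletes a bullet — handles most such cases, as in the analysis of $\catebf{O}_{PM},\catebf{E}_{MS}\models\catebf{I}_{SP}$ discussed in the preliminaries. The residual cases, for which bullet-counting is not decisive (as flagged explicitly in the text), must be ruled out by inspecting the forced orientations of arrows at the middle term $M$: one shows either that $M$ is not erasable under any formal composition, or that erasing it would require swapping the roles of $S$ and $P$, and is therefore illegitimate.
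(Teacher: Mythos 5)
Note first that the paper does not prove this theorem at all: its ``proof'' is a citation to~\cite{Pagnan1}. The fair benchmark is therefore the method the paper actually carries out for the $n$-term generalization, namely lemma~\ref{nolemma} plus theorem~\ref{nsillo}: classify combinatorially \emph{all} concatenations of Aristotelian diagrams that reduce to an Aristotelian diagram, and match the resulting moods against the known valid ones. Your positive direction (exhibit an inference for each of the $15+9$ entries of tables~(\ref{valsyll}) and~(\ref{valhypsyll}), using relabelling to collapse figures) and your uniqueness argument are fine as finite verifications and agree with what the paper does.

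The genuine gap is in the direction ``inference exists $\Rightarrow$ valid.'' Your semantic reading does not work as stated: if arrows are total maps/inclusions and the bullet is an object, then $\catebf{E}_{AB}$, i.e.\ $A\arr\b\leftarrow B$, merely asserts that $A$ and $B$ both map into something, which is vacuously true and does not express ``No $A$ is $B$''; reading the bullet as an \emph{empty} object is worse, since $A\arr\b$ would then force $A$ empty, so $\catebf{E}_{AB}$ would entail $\catebf{E}_{AA}$. There is no naive compositional set-theoretic semantics of these diagrams, which is precisely why the calculus is justified combinatorially rather than semantically. Moreover, your final paragraph about ruling out the $232$ invalid syllogisms is logically the contrapositive of this same soundness claim, so as written you propose to prove one direction twice, once by an interpretation that fails and once by a case analysis whose key tool is missing: ``inspecting the forced orientations at $M$'' only becomes a proof once you have the $3$-term instance of lemma~\ref{nolemma}, i.e.\ an exhaustive list of the shapes $S\cdots M\cdots P$ that admit a reduction, from which one reads off that every reducible concatenation corresponds to a syllogism in the tables and that $M$ is not erasable in any other shape. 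Supply that classification (or a genuinely sound interpretation of $\catebf{E}$ and $\catebf{O}$, e.g.\ one treating the two arrows of the $\catebf{E}$-diagram asymmetrically, verified on each of the finitely many reduction patterns), and the argument closes.
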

\begin{proof}
See~\cite{Pagnan1}.
\end{proof}
\section{$n$-term syllogisms}\label{nsyll}
Whereas syllogisms, either with assumption of existence or not, 
involve exactly $3$ term-variables, $n$-term 
syllogisms involve exactly $n$ term-variables $A_1,\ldots,A_n$, $n\geq 1$, linked by
$n$ categorical propositions any two contiguous of which have
exactly one term in common. We may represent the $n$ categorical
propositions as 
\[\catebf{X}_{A_{n-1}A_n},\catebf{X}_{A_{n-2}A_{n-1}},\ldots,\catebf{X}_{A_2A_3},
\catebf{X}_{A_1A_2}\catebf{X}_{A_1A_n}\]
where \catebf{X} is a symbol
between \catebf{A}, \catebf{E}, \catebf{I}, \catebf{O} and, for every
$i=1,\ldots,n-1$, $\catebf{X}_{A_iA_{i+1}}$ stays either for
$\catebf{X}_{A_iA_{i+1}}$ or for $\catebf{X}_{A_{i+1}A_i}$. 
We write
\[\catebf{X}_{A_{n-1}A_n},\catebf{X}_{A_{n-2}A_{n-1}},\ldots,\catebf{X}_{A_2A_3},
\catebf{X}_{A_1A_2}\models\catebf{X}_{A_1A_n}\]
to denote a generic $n$-term syllogism. We remark that
possibly occurring assumptions of existence of the form
$\catebf{I}_{A_iA_i}$, for some $i=1,\ldots,n$, will be explicitly
mentioned when needed. In doing this, we let the 
expression ``$n$-term syllogism'' comprise the case
in which no assumption of existence occurs as well as the case in which such an
assumption occurs.\\
It is well known that the total number of valid $n$-term syllogisms is
$3n^2-n$, see~\cite{Meredith}, where such a formula was obtained 
by rejecting the not valid moods on the bases of the traditional rules of
syllogism. The same formula was reobtained in~\cite{Smyth} by a 
diagrammatic method allowing a direct calculation.\\
The aim of the present section is that of generalize 
theorem~\ref{main} to the case of $n$-term
syllogisms and simulataneosly that of directly recalculate the previously cited
formula by using syllogistic inferences. 
\begin{lemma}\label{nolemma}
For every positive natural number $n$, 
a syllogistic inference yields an Aristotelian diagram 
as a conclusion in exactly the following cases:
\begin{itemize}
\item[(i)] $A_1\arr A_2\arr\cdots\arr A_i\arr A_{i+1}\arr\cdots\arr
  A_{n-1}\arr A_n$.
\item[(ii)] $A_1\arr A_2\arr\cdots\arr A_i\arr\b\leftarrow A_{i+1}\leftarrow
  \cdots\leftarrow A_{n-1}\leftarrow A_n$, with $1\leq i \leq n-1$.
\item[(iii)] $A_1\leftarrow A_2\leftarrow\cdots\leftarrow A_i\leftarrow\b\arr
  A_{i+1}\arr\cdots\arr A_{n-1}\arr A_n$, with $1\leq i\leq n-1$.
\item[(iv)] $A_1\leftarrow A_2\leftarrow\cdots\leftarrow A_i\leftarrow\b\arr
  A_i\arr\cdots\arr A_{n-1}\arr A_n$, with $1\leq i\leq n$.
\item[(v)] $A_1\leftarrow A_2\leftarrow\cdots\leftarrow
  A_i\leftarrow\b\arr\b\leftarrow A_{i+1}\leftarrow\cdots\leftarrow
  A_{n-1}\leftarrow A_n$, with $1\leq i\leq n-1$. 
\item[(vi)] $A_1\leftarrow\cdots\leftarrow A_i\leftarrow\b\arr
A_{i+1}\arr\cdots\arr A_{j-1}\arr\b\leftarrow
A_j\leftarrow\cdots\leftarrow A_n$, with $1\leq i\lt j\leq n$.
\item[(vii)] $A_1\leftarrow\cdots\leftarrow A_i\leftarrow\b\arr
A_i\arr\cdots\arr A_{j-1}\arr\b\leftarrow
A_j\leftarrow\cdots\leftarrow A_n$, with $1\leq i\lt j\leq n$.
\end{itemize}
\end{lemma}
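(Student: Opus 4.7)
My plan is to prove both directions via a structural analysis of reductions. Two invariants will do the heavy lifting: no reduction deletes a bullet, and a reduction at a middle term-variable $A_k$ requires the two arrows incident to it to share a common direction. Together they imply that if a concatenation $C$ reduces to a single Aristotelian diagram $D$ between $A_1$ and $A_n$, then $C$ and $D$ contain the same number of bullets (necessarily $0$, $1$, or $2$), and every middle term-variable of $C$ must be reducible in the sense just stated.

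For the forward direction I treat each case (i)--(vii) by direct calculation, reducing from the middle outward. In (i) the chain of $\catebf{A}$-type arrows collapses to $\aa{A_1}{A_n}$; in (ii), (iii), (v) the flanking $\catebf{A}$-chains reduce inward onto a central $\catebf{E}$-, $\catebf{I}$-, or $\catebf{O}$-piece, producing $\ee{A_1}{A_n}$, $\ii{A_1}{A_n}$, or $\oo{A_1}{A_n}$ respectively; in (iv), (vi), (vii) extra doubled term-variables coming from assumptions of existence, or bridging $\catebf{A}$-chains between two bullets, are absorbed by the same mechanism.

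For the converse I split on the number of bullets of $D$. Zero bullets forces $D=\aa{A_1}{A_n}$; then $C$ has no bullets, every constituent is an $\catebf{A}$-type piece, and the uniform-direction requirement gives case (i). One bullet forces $D=\ee{A_1}{A_n}$ or $D=\ii{A_1}{A_n}$. In the $\catebf{E}$ case the lone bullet of $C$ must come from an $\catebf{E}$-piece, surrounded by inward-oriented $\catebf{A}$-chains, which is case (ii). In the $\catebf{I}$ case the bullet's two arrows must both be outgoing, so it arises either from an ordinary $\catebf{I}_{A_iA_{i+1}}$ (case (iii)) or from an assumption of existence $\catebf{I}_{A_iA_i}$ producing a doubled index (case (iv)). Two bullets forces $D=\oo{A_1}{A_n}$: the two bullets of $C$ must form the source-sink configuration $\b\to\b$, and come either from a single $\catebf{O}$-piece (case (v)) or from one $\catebf{I}$-piece joined to one $\catebf{E}$-piece by an intermediate $\catebf{A}$-chain, with the $\catebf{I}$-piece either standard (case (vi)) or an assumption of existence (case (vii)).

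The main obstacle is the two-bullet case: one must rule out every other pairing of two bullets inside $C$ --- two $\catebf{E}$-pieces, two $\catebf{I}$-pieces, or an $\catebf{O}$-piece placed against the grain. This is settled by reading off the arrow-direction constraints that $\oo{A_1}{A_n}$ imposes: the unique source bullet of the target has both arrows outgoing, the unique sink bullet has both arrows incoming, and these orientations propagate outward along the $\catebf{A}$-chains, forcing exactly the shapes listed in (v)--(vii) and excluding every other configuration.
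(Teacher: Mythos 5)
Your proposal is correct and follows essentially the same route as the paper: the forward direction by directly reducing each of the listed shapes, and the converse by classifying the possible conclusions according to their bullet count and arrow orientations ($A_1\arr A_n$, $A_1\arr\b\leftarrow A_n$, $A_1\leftarrow\b\arr A_n$, $A_1\leftarrow\b\arr\b\leftarrow A_n$) and propagating those orientation constraints back through the premisses. Your treatment of the two-bullet case is somewhat more explicit than the paper's, which disposes of it in one sentence, but the underlying invariants (bullets are never deleted, reductions require coherently oriented arrows) are the same.
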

\begin{proof}
It is clear that a syllogistic inference applies to each of the
diagrams listed in the statement yielding an Aristotelian diagram 
involving the terms $A_1$ and $A_n$ only. Conversely, we proceed by cases:
\begin{itemize}
\item[(a)] the only way to obtain $A_1\arr A_n$ as a conclusion of a
  syllogistic inference is by (i), since no bullet symbol is allowed to occur
  in the conclusion.
\item[(b)] the only way to obtain $A_1\arr\b\leftarrow A_n$ as a
  conclusion of a syllogistic inference is by (ii), since exactly one
  bullet symbol must occur in the conclusion with two morphisms converging to it.
\item[(c)] the only way to obtain $A_1\leftarrow\b\arr A_n$ as
  a conclusion of a syllogistic inference is by (iii) or (iv), since
  exactly one bullet symbol must occur in the conclusion with two morphisms
  diverging from it.
\item[(d)] the only way to obtain $A_1\leftarrow\b\arr\b\leftarrow
  A_n$ as a conclusion of a syllogistic inference is by (v), (vi) or
  (vii), since exactly two bullet symbols must occur in the conclusion,
  with three alternating morphisms.
\end{itemize}
\end{proof}
\begin{lemma}\label{silemma}
For every positive natural number $n$, let $\vfi(n)$ and $\psi(n)$ be
the number of diagrams like those in points (vi) and (vii) of
lemma~\ref{nolemma}, respectively. The following facts hold
\begin{itemize}
\item[(i)] $\vfi(n)=\frac{(n-1)(n-2)}{2}$.
\item[(ii)] $\psi(n)=\frac{n(n-1)}{2}$.
\end{itemize}
\end{lemma}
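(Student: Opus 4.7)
The plan is a direct enumeration based on the observation that each diagram of the form (vi) or (vii) in lemma~\ref{nolemma} is completely determined by the ordered pair $(i,j)$ locating the two bullet symbols; hence the counts $\vfi(n)$ and $\psi(n)$ reduce to counting the admissible pairs in each case.

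For (i), a diagram of the form (vi) is uniquely specified by a pair $(i,j)$ in which the initial left-pointing segment carries $A_1,\ldots,A_i$, the intermediate right-pointing segment $A_{i+1}\to\cdots\to A_{j-1}$ sits between the two bullets, and the final left-pointing segment carries $A_j,\ldots,A_n$. For the diagram to be well-formed and genuinely distinct from the diagrams of point (v), the intermediate segment must contain at least one term-variable, i.e.\ $i+1\le j-1$, equivalently $j\ge i+2$. I would then count the pairs with $1\le i$ and $i+2\le j\le n$, obtaining
\[
\sum_{i=1}^{n-2}(n-i-1)=\sum_{k=1}^{n-2}k=\frac{(n-1)(n-2)}{2},
\]
which is exactly $\vfi(n)$.

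For (ii), the same parameterization applies to diagrams of the form (vii), with the proviso that the intermediate forward chain $A_i\to\cdots\to A_{j-1}$ now starts at the repeated occurrence of $A_i$ and is therefore allowed to degenerate to the single vertex $A_i$. Well-formedness then amounts to $1\le i<j\le n$, and the number of such pairs is
\[
\frac{n(n-1)}{2}=\psi(n).
\]

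The one step requiring genuine care is the boundary analysis: I need to justify that in (vi) the intermediate segment must contain at least one term-variable (otherwise the diagram would coincide with one of the form (v), already enumerated separately), whereas in (vii) the repetition of $A_i$ legitimately allows an intermediate chain of length one. This slight asymmetry between the two cases is precisely what produces the one-index shift between $\vfi(n)$ and $\psi(n)$.
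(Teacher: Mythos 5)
Your proof is correct, but it proceeds differently from the paper's. The paper establishes the recursions $\vfi(n+1)=\vfi(n)+(n-1)$ and $\psi(n+1)=\psi(n)+n$ by an insertion argument (passing from $n$ to $n+1$ terms means inserting one more arrow symbol into an existing diagram, in one of $n-1$, respectively $n$, admissible positions) and then concludes by induction; you instead enumerate directly, parameterizing each diagram by the pair $(i,j)$ locating the two bullets and counting admissible pairs in closed form. Your route is more self-contained: it avoids the somewhat informal step of verifying that the insertion procedure is a bijection onto the diagrams at level $n+1$, and it makes fully explicit the boundary analysis that the paper leaves implicit, namely that in case (vi) the degenerate choice $j=i+1$ collapses the diagram onto one of shape (v) (counted separately as one of the $n-1$ diagrams of that type), so that the genuinely new diagrams require $j\geq i+2$, whereas in case (vii) the repeated occurrence of $A_i$ keeps the diagram well-formed for every $1\leq i<j\leq n$. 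This is exactly the one-index shift responsible for the difference between $\frac{(n-1)(n-2)}{2}$ and $\frac{n(n-1)}{2}$, and your reading is the one forced by the claimed formulas (the literal range ``$1\leq i<j\leq n$'' stated in case (vi) of lemma~\ref{nolemma} would otherwise overcount by including the shape-(v) degenerations). The paper's recursive argument, for its part, is uniform across the two cases and mirrors the inductive flavour of the surrounding development, but both proofs are valid.
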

\begin{proof}
\begin{itemize}
\item[(i)] 
For every positive natural number $n$, $\vfi(n+1)=\vfi(n)+(n-1)$. 
%
%
%
Because, passing from $n$ to $n+1$ is a matter of inserting one more
arrow symbol $\arr$ or $\leftarrow$, on the left, on the right or in
the middle of the diagrams constructed at $n$, so to extend
them with one more term-variable. 
There are exactly $n-1$ possibilities of doing this. 
Finally, by induction on the number of
term-variables, the thesis is easily achieved.
\item[(ii)] The argument is completely similar to the previous but for
  the fact that for every positive natural number $n$, $\psi(n+1)=\psi(n)+n$.
\end{itemize}
\end{proof}
\begin{theorem}\label{nsillo}
For every positive natural number $n$, an $n$-term syllogism is valid
if and only if there is a (necessarily unique) syllogistic inference
from its premisses to its conclusion. Moreover, the number of valid $n$-term
syllogisms is $3n^2-n$.
\end{theorem}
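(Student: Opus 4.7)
The plan is to combine Lemmas \ref{nolemma} and \ref{silemma} with a soundness/completeness argument and a careful tally of labellings. Lemma \ref{nolemma} gives the seven unlabelled patterns under which the concatenation of the premisses reduces to an Aristotelian diagram between $A_1$ and $A_n$, and Lemma \ref{silemma} counts the two doubly indexed families. The theorem then splits into an equivalence part and an enumeration part, which will be dispatched together.

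For the equivalence ``valid if and only if there exists a syllogistic inference'', I would proceed by adapting the strategy of Theorem \ref{main} to each of the seven patterns. Soundness is handled case by case: for each pattern of Lemma \ref{nolemma}, one verifies by a direct semantic inspection (or a Venn-style argument, as in the $3$-term case) that the reduction of the concatenated premisses to the Aristotelian conclusion reflects a genuine entailment. Completeness is contrapositive: if the concatenation of the premisses does not fit any of the seven patterns, a countermodel can be constructed by the same sort of bullet-counting and orientation argument that dismissed syllogism (\ref{notv}). Uniqueness of the inference is then immediate, since in each pattern the interior term-variables must be collapsed in a forced order.

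For the counting, I would tally the labellings compatible with each unlabelled pattern. An $\catebf{A}$- or $\catebf{O}$-shaped segment between two consecutive distinct term-variables admits a unique labelling, an $\catebf{E}$- or $\catebf{I}$-shaped segment admits two (corresponding to the choice of which endpoint plays the role of subject), and the degenerate $\catebf{I}_{A_iA_i}$ segment appearing in cases (iv) and (vii) admits only one. Thus case (i) contributes $1$, each of (ii) and (iii) contributes $2(n-1)$, case (iv) contributes $n$, case (v) contributes $n-1$, case (vi) contributes $4\vfi(n)=2(n-1)(n-2)$ and case (vii) contributes $2\psi(n)=n(n-1)$, for a total of $1+4(n-1)+n+(n-1)+2(n-1)(n-2)+n(n-1)=3n^2-n$. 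The hardest part will be the completeness half of the equivalence: one must show that \emph{every} valid $n$-term syllogism has its premisses concatenating into one of the seven patterns of Lemma \ref{nolemma}. Once that is in hand, the tally simultaneously recovers Meredith's formula and confirms that the calculus captures precisely the valid $n$-term syllogisms.
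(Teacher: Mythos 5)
Your enumeration is exactly the paper's: the case split over the seven shapes of Lemma~\ref{nolemma}, the labelling count (one labelling per $\catebf{A}$- or $\catebf{O}$-segment and per degenerate $\catebf{I}_{A_iA_i}$, two per $\catebf{E}$- or $\catebf{I}$-segment between distinct variables), and the resulting tally $1+2(n-1)+2(n-1)+n+(n-1)+4\vfi(n)+2\psi(n)=3n^2-n$ all coincide with the proof in the text, which likewise organizes the valid syllogisms into the table~(\ref{nvalsyll}). Where you genuinely depart from the paper is the completeness half of the equivalence, and there your proposal is not a proof: you announce a countermodel construction for every $n$-term syllogism whose premisses do not concatenate into one of the seven patterns, correctly identify it as the hardest step, and leave it undone. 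Note that the bullet-counting and orientation obstruction that disposes of syllogism~(\ref{notv}) does not by itself refute every non-reducible concatenation, so this really is substantive work and not a formality. The paper closes this direction differently and more cheaply: Lemma~\ref{nolemma} shows that the syllogisms admitting a syllogistic inference are precisely those in the table, soundness (inherited from the $3$-term case, Theorem~\ref{main}, and the semantics of the Aristotelian diagrams) shows these are all valid, and since they number $3n^2-n$ --- the independently known total of valid $n$-term syllogisms recalled at the start of the section --- no valid syllogism can lie outside the table. So either import that external count, as the paper implicitly does, or actually build the countermodels; as written, one direction of your ``if and only if'' is open.
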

\begin{proof}
Lemma~\ref{nolemma} and lemma~\ref{silemma},
permit to conclude that the
$n$-term syllogisms in the table
\begin{eqnarray}\label{nvalsyll}
\begin{tabular}{|l|c|}
\hline
syllogism & quantity\\
\hline
$\catebf{A}_{A_{n-1}A_n},\ldots,\catebf{A}_{A_1A_2}\models\catebf{A}_{A_1A_n}$
& 1
\\
\hline
$\catebf{A}_{A_nA_{n-1}},\ldots,\catebf{E}_{A_iA_{i+1}},\ldots,\catebf{A}_{A_1A_2}\models\catebf{E}_{A_1A_n}$
& n-1\\
\hline
$\catebf{A}_{A_nA_{n-1}},\ldots,\catebf{E}_{A_{i+1}A_i},\ldots,\catebf{A}_{A_1A_2}\models\catebf{E}_{A_1A_n}$
& n-1\\
\hline
$\catebf{A}_{A_{n-1}A_n},\ldots,\catebf{I}_{A_iA_{i+1}},\ldots,\catebf{A}_{A_2A_1}\models\catebf{I}_{A_1A_n}$
& n-1\\
\hline
$\catebf{A}_{A_{n-1}A_n},\ldots,\catebf{I}_{A_{i+1}A_i},\ldots,\catebf{A}_{A_2A_1}\models\catebf{I}_{A_1A_n}$
& n-1\\
\hline
$\catebf{A}_{A_{n-1}A_n},\ldots,\catebf{I}_{A_iA_i},\ldots,\catebf{A}_{A_2A_1}\models\catebf{I}_{A_1A_n}$
& n\\
\hline
$\catebf{A}_{A_nA_{n-1}},\ldots,\catebf{O}_{A_iA_{i+1}},\ldots,\catebf{A}_{A_2A_1}\models\catebf{O}_{A_1A_n}$
& n-1\\
\hline
$\catebf{A}_{A_nA_{n-1}},\ldots,\catebf{E}_{A_{j-1}A_j},\ldots,\catebf{I}_{A_iA_{i+1}},\ldots,\catebf{A}_{A_2A_1}\models\catebf{O}_{A_1A_n}$
& $\frac{(n-1)(n-2)}{2}$\\
\hline
$\catebf{A}_{A_nA_{n-1}},\ldots,\catebf{E}_{A_{j-1}A_j},\ldots,\catebf{I}_{A_{i+1}A_i},\ldots,\catebf{A}_{A_2A_1}\models\catebf{O}_{A_1A_n}$
& $\frac{(n-1)(n-2)}{2}$\\
\hline
$\catebf{A}_{A_nA_{n-1}},\ldots,\catebf{E}_{A_jA_{j-1}},\ldots,\catebf{I}_{A_iA_{i+1}},\ldots,\catebf{A}_{A_2A_1}\models\catebf{O}_{A_1A_n}$
& $\frac{(n-1)(n-2)}{2}$\\
\hline
$\catebf{A}_{A_nA_{n-1}},\ldots,\catebf{E}_{A_jA_{j-1}},\ldots,\catebf{I}_{A_{i+1}A_i},\ldots,\catebf{A}_{A_2A_1}\models\catebf{O}_{A_1A_n}$
& $\frac{(n-1)(n-2)}{2}$\\
\hline
$\catebf{A}_{A_nA_{n-1}},\ldots,\catebf{E}_{A_{j-1}A_j},\ldots,\catebf{I}_{A_iA_i},\ldots,\catebf{A}_{A_2A_1}\models\catebf{O}_{A_1A_n}$
& $\frac{n(n-1)}{2}$\\
\hline
$\catebf{A}_{A_nA_{n-1}},\ldots,\catebf{E}_{A_jA_{j-1}},\ldots,\catebf{I}_{A_iA_i},\ldots,\catebf{A}_{A_2A_1}\models\catebf{O}_{A_1A_n}$
& $\frac{n(n-1)}{2}$\\
\hline
\end{tabular}
\end{eqnarray}
are all valid. Moreover they are $3n^2-n$. Conversely, we use lemma
~\ref{nolemma} and lemma~\ref{silemma}, to construct a syllogistic inference
to a given possible conclusion: 
\begin{itemize}
\item[-] By lemma~\ref{nolemma} (i), the diagram
\[\xymatrix@R=1ex{A_1\ar@{}[drrrrrrr]|{\begin{turn}{270}$\models$\end{turn}}\ar@{=}[d]
\ar[r]&A_2\ar[r]&\cdots\ar[r]&A_i\ar[r]&A_{i+1}\ar[r]&\cdots\ar[r]&A_{n-1}\ar[r]&A_n\ar@{=}[d]\\
A_1\ar[rrrrrrr]&&&&&&&A_n}\]
represents the only way to produce evidence for the syllogistic
inference
\[(\catebf{A}_{A_{n-1}A_n})\sharp\cdots\sharp(\catebf{A}_{A_1A_2})\models(\catebf{A}_{A_1A_n})\]
validating the $n$-term syllogism 
\[\catebf{A}_{A_{n-1}A_n},\ldots,\catebf{A}_{A_1A_2}\models\catebf{A}_{A_1A_n}\]
\item[-] By lemma~\ref{nolemma} (ii), the $n-1$ diagrams
\[\xymatrix@R=1ex{A_1\ar@{}[drrrrrr]|{\begin{turn}{270}$\models$\end{turn}}\ar@{=}[d]
\ar[r]& A_2\ar[r]&\cdots A_i\ar[r]&\b&A_{i+1}\ar[l]
\cdots &A_{n-1}\ar[l]&A_n\ar[l]\ar@{=}[d]\\
A_1\ar[rrr]&&&\b&&&A_n\ar[lll]}\]
represent the only way to produce evidence for the syllogistic
inference 
\[(\catebf{A}_{A_nA_{n-1}})^{\circ}\sharp\cdots\sharp(\catebf{E}_{A_iA_{i+1}})
\sharp\cdots\sharp(\catebf{A}_{A_1A_2})\models(\catebf{E}_{A_1A_n})\]
as well as for the syllogistic inference
\[(\catebf{A}_{A_nA_{n-1}})^{\circ}\sharp\cdots\sharp(\catebf{E}_{A_{i+1}A_i})^{\circ}
\sharp\cdots\sharp(\catebf{A}_{A_1A_2})\models(\catebf{E}_{A_1A_n})\]
validating the $n-1$ syllogisms
\[\catebf{A}_{A_nA_{n-1}},\ldots,\catebf{E}_{A_iA_{i+1}},\ldots,\catebf{A}_{A_1A_2}
\models\catebf{E}_{A_1A_n}\]
and the $n-1$ syllogisms
\[\catebf{A}_{A_nA_{n-1}},\ldots,\catebf{E}_{A_{i+1}A_i},\ldots,\catebf{A}_{A_1A_2}
\models\catebf{E}_{A_1A_n}\]
respectively. Thus, in total 
there are $2(n-1)$ valid $n$-term syllogisms with conclusion
$\catebf{E}_{A_1A_n}$.
\item[-] By lemma~\ref{nolemma} (iii), the $n-1$ diagrams
\[\xymatrix@R=1ex{A_1\ar@{}[drrrrrr]|{\begin{turn}{270}$\models$\end{turn}}\ar@{=}[d]&A_2
\ar[l]&\cdots\ar[l]A_i&\b\ar[l]\ar[r]&
  A_{i+1}\cdots\ar[r]& A_{n-1}\ar[r]&A_n\ar@{=}[d]\\
A_1&&&\b\ar[lll]\ar[rrr]&&&A_n}\]
represent the only way to produce evidence for the $n-1$ syllogistic inferences
\[(\catebf{A}_{A_{n-1}A_n})\sharp\cdots\sharp(\catebf{I}_{A_iA_{i+1}})\sharp\cdots\sharp
(\catebf{A}_{A_2A_1})^{\circ}\models(\catebf{I}_{A_1A_n})\]
as well as for the $n-1$ syllogistic inferences
\[(\catebf{A}_{A_{n-1}A_n})\sharp\cdots\sharp(\catebf{I}_{A_{i+1}A_i})^{\circ}\sharp\cdots\sharp
(\catebf{A}_{A_2A_1})^{\circ}\models(\catebf{I}_{A_1A_n})\]
that validate the $n-1$ $n$-term syllogisms
\[\catebf{A}_{A_{n-1}A_n},\ldots,\catebf{I}_{A_iA_{i+1}},\ldots,\catebf{A}_{A_2A_1}
\models\catebf{I}_{A_1A_n}\]
and the $n-1$ $n$-term syllogisms
\[\catebf{A}_{A_{n-1}A_n},\ldots,\catebf{I}_{A_{i+1}A_i},\ldots,\catebf{A}_{A_2A_1}
\models\catebf{I}_{A_1A_n}\]
respectively.\\ 
By lemma~\ref{nolemma} (iv), the $n$ diagrams
\[\xymatrix@R=1ex{A_1\ar@{}[drrrrrr]|{\begin{turn}{270}$\models$\end{turn}}
\ar@{=}[d]&A_2\ar[l]&\cdots\ar[l]A_i&\b\ar[l]\ar[r]&
  A_i\cdots\ar[r]& A_{n-1}\ar[r]&A_n\ar@{=}[d]\\
A_1&&&\b\ar[lll]\ar[rrr]&&&A_n}\]
represent the only way to produce evidence for the $n$ syllogistic inferences
\[(\catebf{A}_{A_{n-1}A_n})\sharp\cdots\sharp(\catebf{I}_{A_iA_i})\sharp
\cdots\sharp(\catebf{A}_{A_2A_1})^{\circ}\models
(\catebf{I}_{A_1A_n})\]
that validate the $n$ $n$-term syllogisms
\[\catebf{A}_{A_{n-1}A_n},\ldots,\catebf{I}_{A_iA_i},\ldots,\catebf{A}_{A_2A_1}
\models\catebf{I}_{A_1A_n}\]
so that in total there are $2(n-1)+n$ valid $n$-term syllogisms with 
conclusion $\catebf{I}_{A_1A_n}$.
\item[-] By lemma~\ref{nolemma} (v), the $n-1$ diagrams
\[\xymatrix@R=1ex@C=3.5ex{A_1\ar@{=}[d]
\ar@{}[drrrrrrr]|{\begin{turn}{270}$\models$\end{turn}}
&A_2\ar[l]&{\cdots
  A_i}\ar[l]&\b\ar[l]\ar[r]&\b&A_{i+1}\ar[l]\cdots&
  A_{n-1}\ar[l]&A_n\ar[l]\ar@{=}[d]\\
A_1&&&\b\ar[lll]\ar[r]&\b&&&A_n\ar[lll]}\]
represent the only way to produce evidence for the $n-1$ syllogistic inferences
\[(\catebf{A}_{A_nA_{n-1}})^{\circ}\sharp\cdots\sharp(\catebf{O}_{A_iA_{i+1}})\sharp\cdots\sharp
(\catebf{A}_{A_2A_1})^{\circ}\models(\catebf{O}_{A_1A_n})\]
that validate the $n-1$ $n$-term syllogisms
\[\catebf{A}_{A_nA_{n-1}},\ldots,\catebf{O}_{A_iA_{i+1}},\ldots,\catebf{A}_{A_2A_1}
\models\catebf{O}_{A_1A_n}.\]
By lemma~\ref{nolemma} (vi) and lemma~\ref{silemma} (i), the $\frac{(n-1)(n-2)}{2}$ diagrams
\[\xymatrix@R=1ex@C=3.5ex{A_1\ar@{}[drrrrrrr]|{\begin{turn}{270}$\models$\end{turn}}
\ar@{=}[d]\ar@{<-}[r]&{\cdots
    A_i}&\b\ar[l]\ar[r]&A_{i+1}\cdots\ar[r]&\cdots A_{j-1}\ar[r]&
\b&{A_j\cdots}\ar[l]&A_n\ar[l]\ar@{=}[d]\\
A_1&&\b\ar[ll]\ar[rrr]&&&\b&&A_n\ar[ll]}\]
represent the only way to produce evidence for the
$4\cdot\frac{(n-1)(n-2)}{2}$ 
sylllogistic inferences
\[(\catebf{A}_{A_nA_{n-1}})^{\circ}\sharp\cdots\sharp(\catebf{E}_{A_{j-1}A_j})\sharp\cdots\sharp
(\catebf{I}_{A_iA_{i+1}})\sharp\cdots\sharp(\catebf{A}_{A_2A_1})^{\circ}
\models(\catebf{O}_{A_1A_n})\]
\[(\catebf{A}_{A_nA_{n-1}})^{\circ}\sharp\cdots\sharp(\catebf{E}_{A_{j-1}A_j})\sharp\cdots
(\catebf{I}_{A_{i+1}A_i})^{\circ}\sharp\cdots\sharp(\catebf{A}_{A_2A_1})^{\circ}
\models(\catebf{O}_{A_1A_n})\]
\[(\catebf{A}_{A_nA_{n-1}})^{\circ}\sharp\cdots\sharp(\catebf{E}_{A_jA_{j-1}})^{\circ}
\sharp\cdots\sharp
(\catebf{I}_{A_iA_{i+1}})\sharp\cdots\sharp(\catebf{A}_{A_2A_1})^{\circ}
\models(\catebf{O}_{A_1A_n})\]
\[(\catebf{A}_{A_nA_{n-1}})^{\circ}\sharp\cdots\sharp(\catebf{E}_{A_jA_{j-1}})^{\circ}
\sharp\cdots\sharp
(\catebf{I}_{A_{i+1}A_i})^{\circ}\sharp\cdots\sharp(\catebf{A}_{A_2A_1})^{\circ}
\models(\catebf{O}_{A_1A_n})\]
that validate the $4\cdot\frac{(n-1)(n-2)}{2}$ $n$-term syllogisms
\[\catebf{A}_{A_nA_{n-1}},\ldots,\catebf{E}_{A_{j-1}A_j},\ldots,\catebf{I}_{A_iA_{i+1}},
\ldots,\catebf{A}_{A_2A_1}\models\catebf{O}_{A_1A_n}\]
\[\catebf{A}_{A_nA_{n-1}},\ldots,\catebf{E}_{A_{j-1}A_j},\ldots,\catebf{I}_{A_{i+1}A_i},
\ldots,\catebf{A}_{A_2A_1}\models\catebf{O}_{A_1A_n}\]
\[\catebf{A}_{A_nA_{n-1}},\ldots,\catebf{E}_{A_jA_{j-1}},\ldots,\catebf{I}_{A_iA_{i+1}},
\ldots,\catebf{A}_{A_2A_1}\models\catebf{O}_{A_1A_n}\]
\[\catebf{A}_{A_nA_{n-1}},\ldots,\catebf{E}_{A_jA_{j-1}},\ldots,\catebf{I}_{A_{i+1}A_i},
\ldots,\catebf{A}_{A_2A_1}\models\catebf{O}_{A_1A_n}\]
respectively.\\ 
By lemma~\ref{nolemma} (vii) and lemma~\ref{silemma} (ii),
the $\frac{n(n-1)}{2}$ diagrams
\[\xymatrix@R=1ex@C=3.5ex{A_1\ar@{}[drrrrrrr]|{\begin{turn}{270}$\models$\end{turn}}
\ar@{=}[d]\ar@{<-}[r]&{\cdots
    A_i}&\b\ar[l]\ar[r]&A_i\cdots\ar[r]&\cdots A_{j-1}\ar[r]&
\b&{A_j\cdots}\ar[l]&A_n\ar[l]\ar@{=}[d]\\
A_1&&\b\ar[ll]\ar[rrr]&&&\b&&A_n\ar[ll]}\]
represent the only way to produce evidence for the $2\cdot\frac{n(n-1)}{2}$
syllogistic inferences
\[(\catebf{A}_{A_nA_{n-1}})^{\circ}\sharp\cdots\sharp(\catebf{E}_{A_{j-1}A_j})\sharp\cdots\sharp
(\catebf{I}_{A_iA_i})\sharp\cdots\sharp(\catebf{A}_{A_2A_1})^{\circ}\models
(\catebf{O}_{A_1A_n})\]
\[(\catebf{A}_{A_nA_{n-1}})^{\circ}\sharp\cdots\sharp(\catebf{E}_{A_jA_{j-1}})^{\circ}
\sharp\cdots\sharp
(\catebf{I}_{A_iA_i})\sharp\cdots\sharp(\catebf{A}_{A_2A_1})^{\circ}
\models(\catebf{O}_{A_1A_n})\]
that validate the $2\cdot\frac{n(n-1)}{2}$ $n$-term syllogisms
\[\catebf{A}_{A_nA_{n-1}},\ldots,\catebf{E}_{A_{j-1}A_j},\ldots,\catebf{I}_{A_iA_i},
\ldots,\catebf{A}_{A_2A_1}\models\catebf{O}_{A_1A_n}\]
\[\catebf{A}_{A_nA_{n-1}},\ldots,\catebf{E}_{A_jA_{j-1}},\ldots,\catebf{I}_{A_iA_i},
\ldots,\catebf{A}_{A_2A_1}\models\catebf{O}_{A_1A_n}\]
respectively. Thus in total there are
$n-1+4\cdot\frac{(n-1)(n-2)}{2}+2\cdot\frac{n(n-1)}{2}$ valid $n$-term
syllogisms with conclusion $\catebf{O}_{A_1A_n}$.
\end{itemize}
In total, the valid $n$-term syllogisms are in number of
\[1+2(n-1)+2(n-1)+n+(n-1)+4\cdot\frac{(n-1)(n-2)}{2}+2\cdot\frac{n(n-1)}{2}=3n^2-n\]
\end{proof}
We end the section with the explicit description 
of the valid $n$-term syllogisms
for $n=1$ and $n=2$, respectively. In the first case, there is only
one figure, that is $A_1A_1$ and only two valid moods for it, that is
$\catebf{A}$ and $\catebf{I}$ so that, as observed
in~\cite{Lukasiewicz} and~\cite{Meredith}, the only valid $1$-term syllogisms are
$\catebf{A}_{A_1A_1}\models\catebf{A}_{A_1A_1}$ and 
$\catebf{I}_{A_1A_1}\models\catebf{I}_{A_1A_1}$, that is the laws of
identity to which we hinted at in the previous section. In the
second case there are two figures, as shown in the table
\[\begin{tabular}{|l|c|c|}
\hline
& fig. 1 & fig. 2\\
\hline
premise & $A_1A_2$ & $A_1A_2$\\
\hline
conclusion & $A_1A_2$ & $A_2A_1$\\
\hline
\end{tabular}\]
and ten valid $2$-term syllogisms, six in the first figure and four
in the second, as follows:
\begin{description}
\item[figure 1:] $\catebf{A}_{A_1A_2}\models\catebf{A}_{A_1A_2}$,
  $\catebf{E}_{A_1A_2}\models\catebf{E}_{A_1A_2}$,
  $\catebf{I}_{A_1A_2}\models\catebf{I}_{A_1A_2}$,
  $\catebf{O}_{A_1A_2}\models\catebf{O}_{A_1A_2}$, 
plus the \emph{laws of subalternation}
  $\catebf{A}_{A_1A_2},\catebf{I}_{A_1A_1}\models\catebf{I}_{A_1A_2}$, $\catebf{E}_{A_1A_2},\catebf{I}_{A_1A_1}\models\catebf{O}_{A_1A_2}$.
\item[figure 2:] $\catebf{E}_{A_2A_1}\models\catebf{E}_{A_1A_2}$,
  $\catebf{I}_{A_2A_1}\models\catebf{I}_{A_1A_2}$ which are the
  \emph{laws of simple conversion}, and
  $\catebf{I}_{A_2A_2},\catebf{A}_{A_2A_1}\models\catebf{I}_{A_1A_2}$,
  $\catebf{E}_{A_2A_1},\catebf{I}_{A_1A_1},\models\catebf{O}_{A_1A_2}$
  which are the \emph{laws of conversion per accidens}.
\end{description}
\section{Syllogisms as rewrite rules}\label{just}
We here point out the existing connections between the previously
introduced calculus of syllogisms and the rewriting of certain terms,
on the base of suitable rewrite rules. On term rewriting systems in
general, the reader may consult~\cite{MR1629216}. Following this, 
we look at rewrite rules as to directed equations, separating a
reducible expression on their left-hand side from a reduced one on
their right-hand side, and at term rewriting as to a computation
mechanism. Applying a rewrite rule gives rise to a
\emph{reduction}. Loosely, from our standpoint the terms and the rewrite rules we are
interested in are finite sequences of Aristotelian diagrams and the valid syllogisms,
respectively, whereas reductions are finite pastings of syllogistic
inferences, so that $\models$
extends to a \emph{reduction relation} on the set of finite sequences
of Aristotelian diagrams. In
pursuing this point of view, we will single out a suitable
category theoretic framework for the calculus. Because of this,
we assume that the reader has already knowledge of the basics 
in category theory as can be found in~\cite{MR1712872}, for
example.\\

We recall that a term is in \emph{normal form} or \emph{irreducible} 
if no rewrite rule applies to it. Otherwise, it is \emph{reducible}. 
If $\models$ denotes a reduction relation, then 
repeated applications of rewrite rules to a reducible
term $t_1$ yield a descending chain of reductions
\[t_1\models t_2\models t_3\models\cdots\]
which may not be finite. A finite chain of reductions will be more
briefly denoted $t_1\models^* t_n$ and a term $t$ is a \emph{direct
  successor} of a term $s$ if $s\models t$.\\
A term rewriting system is 
\begin{itemize}
\item[-] \emph{normalizing} if every term reduces to a normal form.
\item[-] \emph{terminating} if there is no infinite desceding chain of
  reductions
\[t_1\models t_2\models t_3\models\cdots\]
\item[-] \emph{locally confluent} if in reason of the application of
  two different rewrite rules to a term $s$, yielding in turn two
  different terms $t_1$ and $t_2$, then a term $t$ and two finite chain of reductions
\[t_1\models^* t\qquad t_2\models^* t\]
exist.
\item[-] \emph{confluent} if whenever $s\models^* t_1$ and $s\models^*
  t_2$, with $t_1$ and $t_2$ different terms, then 
there exists a term $t$ and two finite chains of reductions
\[t_1\models^* t\qquad t_2\models^* t\]
\item[-] \emph{convergent} if it is both terminating and confluent.
\end{itemize}
A terminating rewriting system is normalizing but not the
converse, see~\cite{MR1629216}, so that in a terminating rewriting
system every term has at least one normal form. On the other hand in a confluent
rewriting system every term has at most one normal form. Thus, in a
convergent rewriting system every term has exactly one normal form.\\
Now, for future reference we mention the following
\begin{lemma}[Newman's lemma]\label{Newman}
A terminating and locally confluent term rewriting system is confluent.
\end{lemma}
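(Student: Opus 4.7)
The plan is to proceed by well-founded (Noetherian) induction, using that the termination hypothesis is precisely the assertion that $\models$ is a well-founded relation. This licenses the principle that to prove a property $P(s)$ at every term $s$, it suffices to prove $P(s)$ under the assumption that $P(s')$ holds at every direct successor $s'$ of $s$. I would take $P(s)$ to be the confluence property at $s$: for all $t_1,t_2$ with $s\models^* t_1$ and $s\models^* t_2$, there exists $t$ with $t_1\models^* t$ and $t_2\models^* t$.

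Fixing $s$ and two such chains $s\models^* t_1$ and $s\models^* t_2$, the trivial cases are those in which one of the chains has length zero, so the substantive case is $s\models s_1\models^* t_1$ and $s\models s_2\models^* t_2$. The plan is then to apply local confluence to the one-step divergence $s\models s_1$, $s\models s_2$, obtaining a common reduct $u$ with $s_1\models^* u$ and $s_2\models^* u$, and then to invoke the induction hypothesis twice. First, since $s_1$ is a direct successor of $s$, the hypothesis $P(s_1)$ applied to $s_1\models^* t_1$ and $s_1\models^* u$ produces $v$ with $t_1\models^* v$ and $u\models^* v$. Second, since $s_2$ is a direct successor of $s$, the hypothesis $P(s_2)$ applied to $s_2\models^* t_2$ and $s_2\models^* u\models^* v$ produces $t$ with $t_2\models^* t$ and $v\models^* t$. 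Concatenating, $t_1\models^* v\models^* t$ and $t_2\models^* t$, which is the desired common reduct.

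The delicate point is the bookkeeping in the induction step: local confluence closes only a one-step fork, so one must invoke the induction hypothesis \emph{twice}, once on each branch, with the common reduct $u$ supplied by local confluence serving as the pivot that lets the two invocations be chained together. Beyond this, termination is used only once (to license the well-founded induction principle) and local confluence only once (at the single fork $s\models s_1$, $s\models s_2$), so no further obstacle arises.
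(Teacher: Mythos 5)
Your proof is correct: it is the standard Noetherian-induction argument (due to Huet) that the paper itself does not reproduce but simply delegates to its cited reference on term rewriting, where exactly this argument appears. The well-founded induction is properly licensed by termination, and the two chained invocations of the induction hypothesis around the pivot $u$ supplied by local confluence are exactly the right bookkeeping, so there is nothing to add.
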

\begin{proof}
See~\cite{MR1629216}.
\end{proof}

For every natural number $n$, $n$-polygraphs and more in general 
$\infty$-polygraphs were introduced in~\cite{MR1224519}. Existing
connections between $n$-polygraphs, for $n=2,3$ in particular, and
rewriting systems were pointed out in~\cite{MR2254890} and~\cite{GuiraudMalbos},
for example, so that later on we will feel free to extend to them the previously introduced
terminology.\\
Now, the idea is that of looking at the calculus of $n$-term
syllogisms as to a rewriting system, or better as to a specific
$2$-polygraph. In this way, it turns out that the calculus takes place in a suitable
categorical structure, freely generated by such a $2$-polygraph.\\

From~\cite{MR1224519}, with personal notations,  we
briefly recall that for every natural number $n$, an $n$-\emph{graph}
is a diagram of sets and functions
\[\xymatrix{G_0&G_1\ar@<-.8ex>[l]_(.4){s_0}\ar@<.8ex>[l]^(.4){t_0}&G_2
\ar@<-.8ex>[l]_(.4){s_1}\ar@<.8ex>[l]^(.4){t_1}&\cdots&
  G_{n-1}&G_n\ar@<.8ex>[l]^(.4){t_{n-1}}\ar@<-.8ex>[l]_(.4){s_{n-1}}}\]
such that for every positive natural number $n$, the \emph{globular identities}
$s_{n-1}s_n=s_{n-1}t_n$ and $t_{n-1}s_n=t_{n-1}t_n$ hold. For every
$0\leq i\leq n$, the functions
$s_i$ and $t_i$ are referred to as \emph{source}
and \emph{target}, respectively, whereas the elements of $G_i$ are
referred to as $i$-\emph{cells}. A $0$-graph is just a set, whereas a $1$-graph is
an ordinary graph. A \emph{morphism of $n$-graphs}, is a family of $n$
functions carrying $i$-cells to $i$-cells, commuting with 
source and target.\\

A $2$-category is a $2$-graph 
\begin{eqnarray}\label{2cat}
\xymatrix{G_0&G_1\ar@<.8ex>[l]^(.4){t_0}\ar@<-.8ex>[l]_(.4){s_0}&G_2
\ar@<.8ex>[l]^(.4){t_1}\ar@<-.8ex>[l]_(.4){s_1}}
\end{eqnarray}
equipped with a category structure on the graph $(G_0,G_1,s_0,t_0)$, a
category structure on the graph $(G_0,G_2,s_0s_1,t_0t_1)$ 
and a category structure on the
graph $(G_1,G_2,s_1,t_1)$, reciprocally compatible. 
In explicit and elementary terms, 
a $2$-category consists of objects and morphisms, also respectively called 
$0$-cells and $1$-cells, that identify a category and moreover 
of $2$-cells $\alpha, \beta, \gamma, \ldots$ between parallel pairs
of $1$-cells such that to every 
morphism $f:A\arr B$ corresponds a designated identity $2$-cell $id_f:f\Arr f$, and
to every morphisms $f,g,h:A\arr B$ and $2$-cells $\alpha:f\Arr g$ and
$\beta:g\Arr h$ corresponds a designated \emph{vertical composite} 
$\beta\cdot\alpha:f\Arr h$, in such a way that the axioms for a
category are satisfied. 
Often, the identity $2$-cell associated to a
$1$-cell $f$ is denoted by the same letter $f$, thus writing 
$f:f\Arr f$. For $1$-cells $f,g:A\arr
B$, $f',g':B\arr C$, to every $2$-cells $\alpha:f\Arr g$ and
$\alpha':f'\Arr g'$ there corresponds a designated \emph{horizontal composite}
$\alpha'\ast\alpha:f'\circ f\Arr g'\circ g$, such that $\alpha'\ast
id_B=\alpha'=id_C\ast\alpha'$. Also, $f'\ast
f=f'\circ f$. Finally, vertical and horizontal composition of
$2$-cells are required to interact suitably, so that 
for every further morphisms $h:A\arr B$, $h':B\arr C$ and for every $2$-cells
$\beta:g\Arr h$ and $\beta':g'\Arr h'$, the \emph{interchange law} 
$(\beta'\cdot\alpha')\ast(\beta\cdot\alpha)=(\beta'\ast\beta)\cdot(\alpha'\ast\alpha)$
holds. A morphism between two $2$-categories 
is a morphism between the underlying $2$-graphs, furthermore
preserving horizontal and vertical compositions and identities, 
and usually referred to as $2$-\emph{functor}.
\begin{example}
Categories, functors and natural transformations considered as
$0$-cells, $1$-cells and $2$-cells respectively, identify the
paradigmatic example of $2$-category.
\end{example}
\begin{example}\label{locdiscr}
Every ordinary category can be seen as a $2$-category with only
identical $2$-cells, which is also called \emph{locally discrete}.
\end{example}
\begin{example}
Sets and relations form a category which  
can be seen as a $2$-category
by letting the $2$-cells be inclusions, i.e. for sets $A$, $B$ and relations $R,S\subseteq
A\times B$, there is a $2$-cell $R\Arr S$ if and only if $R\subseteq S$.
\end{example}
Now, following~\cite{MR1224519} and with personal notations, we give the following
\begin{definition}
A $2$-polygraph $\Sigma$ consists of a diagram 
\begin{eqnarray}\label{2poly}
\xymatrix{&&\Sigma_1\ar@{^{(}->}[d]
\ar@<.6ex>[dll]|{s_0}\ar@<-.8ex>[dll]|{t_0}&&
\Sigma_2\ar@<.6ex>[dll]|{s_1}\ar@<-.8ex>[dll]|{t_1}\\
\Sigma_0&&\Sigma_1^*\ar@<.5ex>[ll]^(.3){\ov{s_0}}\ar@<-.5ex>[ll]_(.3){\ov{t_0}}}
\end{eqnarray}
in which, the functions $\ov{s_0}$, $\ov{t_0}$ 
are the source and target functions of the free 
category generated by the graph $(\Sigma_0,\Sigma_1,s_0,t_0)$,
and where
\begin{eqnarray}\label{2graphund}
\xymatrix{\Sigma_0&\Sigma_1^*\ar@<.8ex>[l]^{\ov{s_0}}\ar@<-.8ex>[l]_{\ov{t_0}}&
\Sigma_2\ar@<.8ex>[l]^{s_1}\ar@<-.8ex>[l]_{t_1}}
\end{eqnarray}
is a $2$-graph.
\end{definition}

As observed in~\cite{MR1224519}, $2$-polygraphs are in connection with
the study of the word problem in categories, generalizing the
one in monoids. The \emph{free $2$-category generated by a
  $2$-polygraph} $\Sigma$, is the $2$-category $\Sigma^*$ occurring in the 
lower part of diagram
\begin{eqnarray}\label{lower}
\xymatrix{&&\Sigma_1\ar@{^{(}->}[d]
\ar@<.6ex>[dll]|{s_0}\ar@<-.8ex>[dll]|{t_0}&&
\Sigma_2\ar@<.6ex>[dll]|{s_1}\ar@<-.8ex>[dll]|{t_1}\ar@{^{(}->}[d]\\
\Sigma_0&&\Sigma_1^*\ar@<.5ex>[ll]^(.3){\ov{s_0}}\ar@<-.5ex>[ll]_(.3){\ov{t_0}}&&
\Sigma_2^*\ar@<.5ex>[ll]^(.3){\ov{s_1}}\ar@<-.5ex>[ll]_(.3){\ov{t_1}}}
\end{eqnarray}
that is the free $2$-category generated by the category
$\xymatrix{\Sigma_0&\Sigma_1^*\ar@<.8ex>[l]^(.4){\ov{s_0}}\ar@<-.8ex>[l]_(.4){\ov{t_0}}}$
with additional $2$-cells provided by $\Sigma_2$. By
following~\cite{MR1224519} again, a more explicit description of such
a free $2$-category can be given by looking at the elements of $\Sigma_2$ as to
diagrams of the form
\[\xymatrix@R=1ex{X\ar@{=}[d]\ar@{}[drrr]|{\begin{turn}{270}$\Arr$\end{turn}}
\ar[r]^{f_1}&\cdots\ar[r]&\cdots\ar[r]^{f_n}&Y\ar@{=}[d]\\
X\ar[r]_{g_1}&\cdots\ar[r]&\cdots\ar[r]_{g_m}&Y}\]
and at the elements of $\Sigma_2^*$ as to $2$-paths.\\
There exists a morphism of $2$-graphs from the
$2$-graph~(\ref{2graphund}) to the $2$-graph underlying the $2$-category
which is the lower part of diagram~(\ref{lower}), precisely the one which is the inclusion
of $\Sigma_2$ in $\Sigma_2^*$ and the identical function on $\Sigma_0$
and $\Sigma_1^*$, so that freeness of the $2$-category 
\[\xymatrix{\Sigma_0&\Sigma_1^*\ar@<.8ex>[l]^{\ov{s_0}}\ar@<-.8ex>[l]_{\ov{t_0}}&
\Sigma_2^*\ar@<.8ex>[l]^{\ov{s_1}}\ar@<-.8ex>[l]_{\ov{t_1}}}\]
amounts to the following: for every
$2$-category $\cateb{C}=(C_0,C_1,C_2)$ and for every
morphism of $2$-graphs $F=(F_0,F_1,F_2):(\Sigma_0,\Sigma_1^*,\Sigma_2)\arr\cateb{C}$
there exists a unique $2$-functor $F^*=(F_0,F_1,F_2^*):\Sigma^*\arr\cateb{C}$
extending $F$, namely such that the diagram
\[\xymatrix{\Sigma_2\ar[dr]_{F_2}\ar@{^{(}->}[r]&\Sigma_2^*\ar[d]^{F_2^*}\\
&C_2}\]
commutes.
\begin{definition}\label{polycalc}
Let $n$ be a positive natural number. 
The \emph{polygraph for the calculus of $n$-term syllogisms} is the
$2$-polygraph \cate{S} identified by the following data:
\begin{itemize}
\item[-] a set $\cate{S}_0=\{A_1,\ldots,A_n\}$ of \emph{term-variables}.
\item[-] a set $\cate{S}_1$ whose elements are the Aristotelian diagrams
\[\begin{tabular}{l l l}
$\AA{A_i}{A_j}$ & $\AAo{A_j}{A_i}$ & $1\leq i\leq j\leq n$\\
$\EE{A_i}{A_j}$ & $\EEo{A_j}{A_i}$ & $1\leq i\leq j\leq n$\\
$\II{A_i}{A_j}$ & $\IIo{A_j}{A_i}$ & $1\leq i\leq j\leq n$\\
$\OO{A_i}{A_j}$ & $\OOo{A_j}{A_i}$ & $1\leq i\leq j\leq n$
\end{tabular}\]
together with the evident source and target functions.
\item[-] a set $\cate{S}_2$ of \emph{rewrite rules}, which are the
  following syllogistic inferences for the corresponding valid syllogisms:
\[\begin{tabular}{c c c}
$(\aa{A_i}{A_i})\models(\aa{A_i}{A_i})$ & 
  $(\ii{A_i}{A_i})\models(\ii{A_i}{A_i})$ & $1\leq i \leq n$\\
\end{tabular}\]
\[\begin{tabular}{c c c}
$(\aa{A_i}{A_j})\models(\aa{A_i}{A_j})$ & 
  $(\ii{A_i}{A_j})\models(\ii{A_i}{A_j})$ & $1\leq i\lt j\leq n$\\
$(\ee{A_i}{A_j})\models(\ee{A_i}{A_j})$ &
$(\oo{A_i}{A_j})\models(\oo{A_i}{A_j})$ & $1\leq i\lt j\leq n$\\
$(\aa{A_i}{A_j})\sharp(\ii{A_i}{A_i})\models\ii{A_i}{A_j}$ &
$(\ee{A_i}{A_j})\sharp(\ii{A_i}{A_i})\models\oo{A_i}{A_j}$ & $1\leq
i\lt j\leq n$
\end{tabular}\]
\[\begin{tabular}{c c c}
$(\ee{A_j}{A_i})^{\circ}\models(\ee{A_i}{A_j})$ &
$(\ii{A_j}{A_i})^{\circ}\models(\ii{A_i}{A_j})$ & $1\leq i\lt j\leq n$\\
$(\ii{A_j}{A_j})\sharp(\aa{A_j}{A_i})^{\circ}\models(\ii{A_i}{A_j})$ &
$(\ee{A_j}{A_i})^{\circ}\sharp(\ii{A_i}{A_i})\models(\oo{A_i}{A_j})$ &
$1\leq i\lt j\leq n$
\end{tabular}\]
\[\begin{tabular}{c c c}
$(\aa{A_j}{A_k})\sharp(\aa{A_i}{A_j})\models(\aa{A_i}{A_k})$ &
$(\ee{A_j}{A_k})\sharp(\aa{A_i}{A_j})\models(\ee{A_i}{A_k})$ & $1\leq
i\lt j\lt k\leq n$\\
$(\catebf{A}_{A_jA_k})\sharp(\catebf{I}_{A_iA_j})\models(\catebf{I}_{A_iA_k})$&
$(\catebf{E}_{A_jA_k})\sharp(\catebf{I}_{A_iA_j})\models(\catebf{O}_{A_iA_k})$&
$1\leq i\lt j\lt k\leq n$
\end{tabular}\]
\[\begin{tabular}{c c c}
$(\catebf{E}_{A_kA_j})^{\circ}\sharp(\catebf{A}_{A_iA_j})\models(\catebf{E}_{A_iA_k})$&
$(\catebf{A}_{A_kA_j})^{\circ}\sharp(\catebf{E}_{A_iA_j})\models(\catebf{E}_{A_iA_k})$&
$1\leq i\lt j\lt k\leq n$\\
$(\catebf{E}_{A_kA_j})^{\circ}\sharp(\catebf{I}_{A_iA_j})\models(\catebf{O}_{A_iA_k})$&
$(\catebf{A}_{A_kA_j})^{\circ}\sharp(\catebf{O}_{A_iA_j})\models(\catebf{O}_{A_iA_k})$&
$1\leq i\lt j\lt k\leq n$
\end{tabular}\]
\[\begin{tabular}{c c c}
$(\catebf{I}_{A_jA_k})\sharp(\catebf{A}_{A_jA_i})^{\circ}\models(\catebf{I}_{A_iA_k})$&
$(\catebf{A}_{A_jA_k})\sharp(\catebf{I}_{A_jA_i})^{\circ}\models(\catebf{I}_{A_iA_k})$&$1\leq
i\lt j\lt k\leq n$\\
$(\catebf{O}_{A_jA_k})\sharp(\catebf{A}_{A_jA_i})^{\circ}\models(\catebf{O}_{A_iA_k})$&
$(\catebf{E}_{A_jA_k})\sharp(\catebf{I}_{A_jA_i})^{\circ}\models(\catebf{O}_{A_iA_k})$&
$1\leq i\lt j\lt k\leq n$
\end{tabular}\]
\[\begin{tabular}{c c c}
$(\catebf{A}_{A_kA_j})^{\circ}\sharp(\catebf{E}_{A_jA_i})^{\circ}\models(\catebf{E}_{A_iA_k})$&
$(\catebf{I}_{A_kA_j})^{\circ}\sharp(\catebf{A}_{A_jA_i})^{\circ}\models(\catebf{I}_{A_iA_k})$&
$1\leq i\lt j\lt k\leq n$\\
$(\catebf{E}_{A_kA_j})^{\circ}\sharp(\catebf{I}_{A_jA_i})^{\circ}\models(\catebf{O}_{A_iA_k})$& 
&$1\leq i\lt j\lt k\leq n$
\end{tabular}\]
%
%
together with the evident source and target functions.
\end{itemize}
\end{definition}

It is useful to exemplify the previous
definition in the cases $n=1,2,3$.\\

If $n=1$, then 
\begin{itemize}
\item[-]
$\cate{S}_0=\{A_1\}$. 
\item[-]
$\cate{S}_1=\{\aa{A_1}{A_1},(\aa{A_1}{A_1})^{\circ},\ee{A_1}{A_1},(\ee{A_1}{A_1})^{\circ},
\ii{A_1}{A_1},(\ii{A_1}{A_1})^{\circ},\oo{A_1}{A_1},(\oo{A_1}{A_1})^{\circ}\}$.
\item[-]
  $\cate{S}_2=\{(\aa{A_1}{A_1})\models(\aa{A_1}{A_1}),(\ii{A_1}{A_1})
\models(\ii{A_1}{A_1})\}$.
\end{itemize}
so that $\cate{S}^*$ is a locally discrete $2$-category, see
example~\ref{locdiscr}, that is an ordinary category. 
The calculus of $1$-term syllogisms consists of the sole laws of
identity $\aa{A_1}{A_1}\models\aa{A_1}{A_1}$ and $\ii{A_1}{A_1}\models\ii{A_1}{A_1}$,
recovered by the rewrite rules in $\cate{S}_2$ above.\\

If $n=2$, then 
\begin{itemize}
\item[-] $\cate{S}_0=\{A_1,A_2\}$.
\item[-] $\cate{S}_1=\{(\catebf{X}_{A_iA_i})\,|\, i=1,2\}
\cup\{(\catebf{X}_{A_iA_i})^{\circ}\,|\, i=1,2\}
\cup\{(\catebf{X}_{A_iA_j})\,|\, 1\leq i\lt j\leq
  2\}\cup\cup\{(\catebf{X}_{A_jA_i})^{\circ}\,|\, 1\leq i\lt j\leq
  2\}$, where $\catebf{X}\in\{\catebf{A},\catebf{E},\catebf{I},\catebf{O}\}$.
\item[-] $\cate{S}_2$ consists of the rewrite rules
\[\begin{tabular}{c c c}
$(\aa{A_i}{A_i})\models(\aa{A_i}{A_i})$ & 
  $(\ii{A_i}{A_i})\models(\ii{A_i}{A_i})$ & $1\leq i \leq 2$\\
\end{tabular}\]
\[\begin{tabular}{c c c}
$(\aa{A_i}{A_j})\models(\aa{A_i}{A_j})$ & 
  $(\ii{A_i}{A_j})\models(\ii{A_i}{A_j})$ & $1\leq i\lt j\leq 2$\\
$(\ee{A_i}{A_j})\models(\ee{A_i}{A_j})$ &
$(\oo{A_i}{A_j})\models(\oo{A_i}{A_j})$ & $1\leq i\lt j\leq 2$\\
$(\aa{A_i}{A_j})\sharp(\ii{A_i}{A_i})\models\ii{A_i}{A_j}$ &
$(\ee{A_i}{A_j})\sharp(\ii{A_i}{A_i})\models\oo{A_i}{A_j}$ & $1\leq
i\lt j\leq 2$
\end{tabular}\]
\[\begin{tabular}{c c c}
$(\ee{A_j}{A_i})^{\circ}\models(\ee{A_i}{A_j})$ &
$(\ii{A_j}{A_i})^{\circ}\models(\ii{A_i}{A_j})$ & $1\leq i\lt j\leq 2$\\
$(\ii{A_j}{A_j})\sharp(\aa{A_j}{A_i})^{\circ}\models(\ii{A_i}{A_j})$ &
$(\ee{A_j}{A_i})^{\circ}\sharp(\ii{A_i}{A_i})\models(\oo{A_i}{A_j})$ &
$1\leq i\lt j\leq 2$
\end{tabular}\]
\end{itemize}
The previous data extend those for the calculus of $1$-term
syllogisms to recover the calculus of $2$-term
syllogisms. In particular, it is
possible to recognize the syllogistic inferences validating the
laws of subalternation, simple conversion
and conversion per accidens, already encountered at the end of section~\ref{nsyll}.\\ 

If $n=3$, then 
the data for the calculus of $3$-term syllogisms extend the previous
and amount to the whole of those listed in definition~\ref{polycalc}. 
The syllogistic inferences validating the syllogisms with
assumption of existence in table~(\ref{valhypsyll}) are
obtainable. For instance, the syllogistic inference validating $\catebf{AAI}$ in the fourth
figure and \catebf{AEO} in the second figure are given by the two step reductions
\[(\ii{A_3}{A_3})\sharp(\aa{A_3}{A_2})^{\circ}\sharp(\aa{A_2}{A_1})^{\circ}
\models(\ii{A_2}{A_3})\sharp(\aa{A_2}{A_1})^{\circ}\models(\ii{A_1}{A_3})\]
and
\[(\aa{A_3}{A_2})^{\circ}\sharp(\ee{A_1}{A_2})\sharp(\ii{A_1}{A_1})
\models(\aa{A_3}{A_2})^{\circ}\sharp(\oo{A_1}{A_2})\models(\oo{A_1}{A_3})\]
respectively, where the evident rewrite rules have been applied. The
calculations for the remaining syllogisms with assumption of existence
are similar.\\

The elements of $\cate{S}_1^*$ are words in the elements of $\cate{S}_1$
and will be henceforth referred to as \emph{terms}. 
We let the \emph{length}
of a term be the number of Aristotelian diagrams by which it
is formed, to which we will also refer to as \emph{premisses}. Thus
for example $(\aa{A_2}{A_3})$ is a term of length $1$,
whereas $(\ee{A_3}{A_4})^{\circ}\sharp(\ii{A_2}{A_3})\sharp(\aa{A_2}{A_1})^{\circ}$
is a term of length $3$. It is intuitively clear what a \emph{subterm}
is: $(\ee{A_3}{A_4})^{\circ}$,
$(\ee{A_3}{A_4})^{\circ}\sharp(\ii{A_2}{A_3})$ are substerms of
$(\ee{A_3}{A_4})^{\circ}\sharp(\ii{A_2}{A_3})\sharp(\aa{A_2}{A_1})^{\circ}$,
for example. Moreover, \emph{overlapping subterms} may occurr, namely
those that have some of their premisses in common, so that for example
$(\ee{A_3}{A_4})^{\circ}$ and 
$(\ee{A_3}{A_4})^{\circ}\sharp(\ii{A_2}{A_3})$ are overlapping
subterms of
$(\ee{A_3}{A_4})^{\circ}\sharp(\ii{A_2}{A_3})\sharp(\aa{A_2}{A_1})^{\circ}$,
as well as $(\ee{A_3}{A_4})^{\circ}\sharp(\ii{A_2}{A_3})$ and 
$(\ii{A_2}{A_3})\sharp(\aa{A_2}{A_1})^{\circ}$.
Terms undergo reduction by the rewrite rules
in $\cate{S}_2$. These are said to be trivial if they have exactly one
premise coinciding with their conclusion, otherwise are non-trivial. In
the free $2$-category $\cate{S}^*$, the trivial rewrite rules will
correspond to identical $2$-cells.
\begin{example}\label{exxe} 
\begin{itemize}
\item[(i)] the term
\[(\ee{A_1}{A_2})\sharp(\aa{A_1}{A_1})^{\circ}\]
cannot be rewritten on the base of any of the rewrite rules in $\cate{S}_2$.
\item[(ii)] the term
\[(\ee{A_3}{A_4})\sharp(\ii{A_3}{A_3})\sharp(\ee{A_2}{A_3})\sharp(\ii{A_1}{A_2})\]
undergoes rewriting by the sequential application of the rewrite
rules 
\[(\ee{A_3}{A_4})\sharp(\ii{A_3}{A_3})\models(\oo{A_3}{A_4})\qquad
(\ee{A_2}{A_3})\sharp(\ii{A_1}{A_2})\models(\oo{A_1}{A_3})\]
thus obtaining
\[(\ee{A_3}{A_4})\sharp(\ii{A_3}{A_3})\sharp(\ee{A_2}{A_3})\sharp(\ii{A_1}{A_2})
\models(\oo{A_3}{A_4})\sharp(\ee{A_2}{A_3})\sharp(\ii{A_1}{A_2})
\models(\oo{A_3}{A_4})\sharp(\oo{A_1}{A_3})\]
giving rise to a term which cannot be further rewritten.
\item[(iii)] the term
\[(\ee{A_4}{A_5})\sharp(\ii{A_3}{A_4})\sharp(\aa{A_3}{A_2})^{\circ}\sharp(\ee{A_1}{A_2})\]
can be rewritten in the following ways
\[(\ee{A_4}{A_5})\sharp(\ii{A_3}{A_4})\sharp(\aa{A_3}{A_2})^{\circ}\sharp(\ee{A_1}{A_2})\models
(\ee{A_4}{A_5})\sharp(\ii{A_3}{A_4})\sharp(\ee{A_1}{A_3})\models(\oo{A_3}{A_5})
\sharp(\ee{A_1}{A_3})\]
\[(\ee{A_4}{A_5})\sharp(\ii{A_3}{A_4})\sharp(\aa{A_3}{A_2})^{\circ}\sharp(\ee{A_1}{A_2})
\models(\oo{A_3}{A_5})\sharp(\aa{A_3}{A_2})^{\circ}\sharp(\ee{A_1}{A_2})
\models(\oo{A_2}{A_5})\sharp(\ee{A_1}{A_2})\]
\[(\ee{A_4}{A_5})\sharp(\ii{A_3}{A_4})\sharp(\aa{A_3}{A_2})^{\circ}\sharp(\ee{A_1}{A_2})
\models(\oo{A_3}{A_5})\sharp(\aa{A_3}{A_2})^{\circ}\sharp(\ee{A_1}{A_2})
\models(\oo{A_3}{A_5})\sharp(\ee{A_1}{A_3})\]
\[(\ee{A_4}{A_5})\sharp(\ii{A_3}{A_4})\sharp(\aa{A_3}{A_2})^{\circ}\sharp(\ee{A_1}{A_2})
\models(\ee{A_4}{A_5})\sharp(\ii{A_2}{A_4})\sharp(\ee{A_1}{A_2})
\models(\oo{A_2}{A_5})\sharp(\ee{A_1}{A_2})\]
by appplying evident rewrite rules to obtain terms which cannot be
further rewritten, coinciding up to renaming of term-variables.
\item[(iv)] the term
\[(\aa{A_5}{A_4})^{\circ}\sharp(\ee{A_3}{A_4})\sharp(\aa{A_3}{A_2})^{\circ}
\sharp(\ee{A_1}{A_2})\sharp(\ii{A_1}{A_1})\]
can be rewritten as
\[\begin{array}{ll}
(\aa{A_5}{A_4})^{\circ}\sharp(\ee{A_3}{A_4})\sharp(\aa{A_3}{A_2})^{\circ}
\sharp(\ee{A_1}{A_2})\sharp(\ii{A_1}{A_1})\models\\
\models
(\aa{A_5}{A_4})^{\circ}\sharp(\ee{A_3}{A_4})\sharp(\aa{A_3}{A_2})^{\circ}\sharp(\oo{A_1}{A_2})
\models\\
(\aa{A_5}{A_4})^{\circ}\sharp(\ee{A_2}{A_4})\sharp(\oo{A_1}{A_2})\models\\
\models(\ee{A_2}{A_5})\sharp(\oo{A_1}{A_2})
\end{array}\]
or as
\[\begin{array}{ll}
(\aa{A_5}{A_4})^{\circ}\sharp(\ee{A_3}{A_4})\sharp(\aa{A_3}{A_2})^{\circ}
\sharp(\ee{A_1}{A_2})\sharp(\ii{A_1}{A_1})
\models\\
\models(\ee{A_3}{A_5})\sharp(\aa{A_3}{A_2})^{\circ}\sharp(\ee{A_1}{A_2})\sharp(\ii{A_1}{A_1})
\models\\
\models(\ee{A_3}{A_5})\sharp(\ee{A_1}{A_3})\sharp(\ii{A_1}{A_1})\models\\
\models(\ee{A_3}{A_5})\sharp(\oo{A_1}{A_3})
\end{array}\]
by applying evident rewrite rules to obtain terms which cannot be
further rewritten, coinciding up to renaming of term-variables.
\end{itemize}
\end{example}
Thinking of the $2$-polygraph for the calculus of $n$-term syllogisms as to a
term rewriting system, the questions on being it terminating
and confluent naturally arise. 
With respect to termination, it must be observed that the length of
the terms that undergo reduction by the application of any of the
non-trivial rewrite rules, 
strictly decreases. The sole exception is represented by the laws of
simple conversion, whose application on the other hand cannot be indefinetely iterated.
In order to prove that a rewriting
system is terminating it suffices to embed it in a rewriting system which is already
known to be such, see~\cite{MR1629216}, typically the set
$\cateb{N}$ of natural numbers together with the ``greater than'' relation
$\gt$. In the case of the $2$-polygraph for the calculus of $n$-term
syllogisms, it suffices to consider the function
$f:\Sigma_1^*\arr\cateb{N}$ which, to each term, assigns its length
and observe that an infinite chain of length-decreasing
reductions would induce an infinite descending chain in $\cateb{N}$.\\

With respect to confluence, things are more delicate. The idea is that
of proving the local confluence of \cate{S} and then conclude by applying
Newman's lemma~\ref{Newman}. Once the number $n$ of occurring term-variables
have been fixed,
proving local confluence of \cate{S}
amounts to testing the effect of the application
of two different rewrite rules on the same subterm of an arbitrary
term. In doing this only the non-trivial rewrite rules have to be taken into account, since
otherwise no significant rewriting takes place. Moreover,
the interesting case is that of the application of such non-trivial
rewrite rules to overlapping subterms. In fact, in general, if
$xs_1ys_2z$ is a term in which the non-overlapping subterms $s_1$ and $s_2$
occurr, then the application of rewrite rules $R_1$ and $R_2$ to them,
yielding the different terms $\sigma_1$ and $\sigma_2$ say,
can always be made confluent as illustrated by the following diagram:
\[\xymatrix{xs_1ys_2z\ar[d]_{R_2}\ar[r]^{R_1}&x\sigma_1ys_2x\ar[d]^{R_2}\\
xs_1y\sigma_2z\ar[r]_{R_1}&x\sigma_1y\sigma_2z}\] 
\begin{theorem}\label{aupt}
For every positive natural number $n$, $n\geq 2$, the $2$-polygraph for the
calculus of $n$-term sylllogisms is locally confluent up to renaming
of term-variables. 
\end{theorem}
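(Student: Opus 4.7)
The plan is to reduce local confluence to a finite analysis of critical pairs between non-trivial rewrite rules in $\cate{S}_2$, and then verify each case up to renaming of term-variables. As the paragraph preceding the statement observes, applications of rewrite rules to non-overlapping subterms of a common term always commute, so the only configurations that need to be checked are those in which two different non-trivial rewrite rules act on overlapping subterms. Trivial rules, that is the identities of the form $(\catebf{X}_{A_iA_j}) \models (\catebf{X}_{A_iA_j})$, produce no change and so contribute no critical pair and may be ignored at the outset.

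Each non-trivial rule has a left-hand side of length one, namely the simple conversions $(\ee{A_j}{A_i})^{\circ} \models \ee{A_i}{A_j}$ and $(\ii{A_j}{A_i})^{\circ} \models \ii{A_i}{A_j}$, or of length two, which comprises all remaining rules, including subalternation, conversion per accidens and the ternary syllogistic reductions. The critical overlaps accordingly fall into two families. In family (i), a conversion rewrites a single premise $D$ to its dual $D'$ while a length-two rule simultaneously consumes $D$ together with an adjacent premise $E$; inspection of $\cate{S}_2$ shows that the length-two rules come in matched pairs related precisely by this dualization, as one sees by comparing $(\ee{A_j}{A_k}) \sharp (\aa{A_i}{A_j}) \models (\ee{A_i}{A_k})$ with $(\ee{A_k}{A_j})^{\circ} \sharp (\aa{A_i}{A_j}) \models (\ee{A_i}{A_k})$, so both reduction paths meet at the same conclusion in a single further step. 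In family (ii), two length-two rules act on a length-three subterm $D_1 \sharp D_2 \sharp D_3$ by sharing the middle premise $D_2$; this subterm involves four term-variables and therefore fits one of the seven canonical shapes listed in Lemma~\ref{nolemma}, so by Theorem~\ref{nsillo} it can be further reduced, in either order, to a single Aristotelian diagram of the prescribed type.

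The main obstacle is the combinatorial bookkeeping within family (ii): for each admissible choice of dualization patterns on the three premisses one has to check that both rewriting orders yield Aristotelian diagrams of the same type and between the same outer term-variables, possibly differing only in the name of the intermediate variable that survives. Lemma~\ref{nolemma} constrains the possible final shapes to exactly seven, reducing the analysis to a manageable number of verifications, each of which is essentially a variant of the calculations already carried out in example~\ref{exxe}. The caveat \emph{up to renaming of term-variables} is intrinsic to the statement, as examples~\ref{exxe}(iii) and~\ref{exxe}(iv) make plain: depending on which middle term one eliminates first, a different intermediate variable survives in the normal form, while the outer term-variables and the Aristotelian type of the conclusion coincide. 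Once every critical pair has been resolved in this way, local confluence up to renaming follows.
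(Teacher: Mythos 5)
Your overall organization---dispose of trivial rules and non-overlapping redexes, then classify the critical overlaps by the lengths of the left-hand sides involved (a length-one conversion against a length-two rule, or two length-two rules sharing a middle premise)---is sound and in fact bounds the analysis uniformly in $n$, which is tidier than an explicit induction on $n$. Family (i) is also handled correctly: the length-two rules do pair off under dualization of a convertible premise, and both paths close in one further step; this is exactly what the paper checks for the length-two overlaps. The problem is family (ii). You claim that the length-three overlap $D_1\sharp D_2\sharp D_3$ ``fits one of the seven canonical shapes listed in Lemma~\ref{nolemma}, so by Theorem~\ref{nsillo} it can be further reduced, in either order, to a single Aristotelian diagram of the prescribed type.'' That inference is false: the fact that $D_1\sharp D_2$ and $D_2\sharp D_3$ are each reducible does not make the full concatenation reducible to an Aristotelian diagram. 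A concrete counterexample, taken from the very case list one must check, is $(\oo{A_k}{A_l})\sharp(\aa{A_k}{A_j})^{\circ}\sharp(\ee{A_j}{A_i})^{\circ}$: its underlying path is $A_i\arr\b\leftarrow A_j\leftarrow A_k\leftarrow\b\arr\b\leftarrow A_l$, which contains three bullet symbols and so matches none of the seven shapes of Lemma~\ref{nolemma}. Its two normal forms are $(\oo{A_j}{A_l})\sharp(\ee{A_i}{A_j})$ and $(\oo{A_k}{A_l})\sharp(\ee{A_i}{A_k})$, terms of length two that agree only after renaming the surviving internal variable.

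This is not a cosmetic slip, because it is precisely the reason the theorem can only be stated \emph{up to renaming of term-variables}. Your own closing paragraph, citing example~\ref{exxe}(iii)--(iv), acknowledges that the two orders of reduction may leave different intermediate variables in the normal form; but that observation contradicts the family (ii) argument as you wrote it, since a genuine reduction to a single Aristotelian diagram between the outer variables would leave nothing to rename. What is actually needed for family (ii) is a finite case check of the length-three overlaps on at most four distinct term-variables (plus the shorter ones involving premisses of the form $\catebf{I}_{A_iA_i}$), verifying in each case that the two irreducible results coincide up to a bijective renaming of the internal variable that survives; this is the content of the paper's $n=3$ and $n=4$ case lists, after which all larger overlaps decompose into the tested ones. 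Until that verification replaces the appeal to Lemma~\ref{nolemma} and Theorem~\ref{nsillo}, the proof has a gap.
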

\begin{proof}
We proceed by cases:
\begin{itemize}
\item[$n=2$:] The interesting cases are those that amount to the
  application of non-trivial rewrite rules to subterms in which
  exactly two distinct term-variables occurr, because of the condition
  $1\leq i\lt j\leq 2$. Thus, the only case to test is provided by the term
  $(\ee{A_j}{A_i})^{\circ}\sharp(\ii{A_i}{A_i})$, to be considered as
  the overlap of $(\ee{A_j}{A_i})^{\circ}\sharp(\ii{A_i}{A_i})$ itself
  with $(\ee{A_j}{A_i})^{\circ}$. The local confluence
  on this subterm is shown by the reductions
\[(\ee{A_j}{A_i})^{\circ}\sharp(\ii{A_i}{A_i})\models(\oo{A_i}{A_j})\]
\[(\ee{A_j}{A_i})^{\circ}\sharp(\ii{A_i}{A_i})\models(\ee{A_i}{A_j})\sharp(\ii{A_i}{A_i})
\models(\oo{A_i}{A_j})\]
in which the evident rewrite rules have been applied.
\item[$n=3$:] The interesting cases are those that amount to the
  application of non-trivial rewrite rules to subterms in which 
  exactly two or exactly three distinct term-variable occurr, because
  of the conditions $1\leq i\lt j\leq 3$ and $1\leq i\lt j\lt k\leq
  3$. Thus, with respect to the subterms of length two, the cases to test are
\[\begin{tabular}{c c}
$(\ee{A_k}{A_j})^{\circ}\sharp(\aa{A_i}{A_j})$ & 
$(\aa{A_k}{A_j})^{\circ}\sharp(\ee{A_j}{A_i})^{\circ}$\\
$(\ee{A_k}{A_j})^{\circ}\sharp(\ii{A_i}{A_j})$ &
$(\ee{A_j}{A_k})\sharp(\ii{A_j}{A_i})^{\circ}$\\
$(\aa{A_j}{A_k})\sharp(\ii{A_j}{A_i})^{\circ}$ & 
$(\ee{A_j}{A_k})^{\circ}\sharp(\ii{A_j}{A_i})^{\circ}$\\
$(\ii{A_k}{A_j})^{\circ}\sharp(\aa{A_j}{A_i})^{\circ}$ &
$(\ee{A_j}{A_i})^{\circ}\sharp(\ii{A_i}{A_i})$. 
\end{tabular}\]
all of which are easily seen to be locally confluent.
With respect to the subterms of length three, the interesting cases are
\[\begin{tabular}{c c}
$(\aa{A_j}{A_k})\sharp(\aa{A_i}{A_j})\sharp(\ii{A_i}{A_i})$ &
$(\ee{A_j}{A_k})\sharp(\aa{A_i}{A_j})\sharp(\ii{A_i}{A_i})$\\
$(\aa{A_k}{A_j})^{\circ}\sharp(\ee{A_i}{A_j})\sharp(\ii{A_i}{A_i})$ &
$(\ee{A_k}{A_j})^{\circ}\sharp(\aa{A_i}{A_j})\sharp(\ii{A_i}{A_i})$\\
$(\ee{A_j}{A_k})\sharp(\ii{A_j}{A_j})\sharp(\aa{A_j}{A_i})^{\circ}$ &
$(\aa{A_k}{A_j})^{\circ}\sharp(\ee{A_j}{A_i})^{\circ}\sharp(\ii{A_i}{A_i})$\\
$(\ee{A_k}{A_j})^{\circ}\sharp(\ii{A_j}{A_j})\sharp(\aa{A_j}{A_i})^{\circ}$
& $(\aa{A_j}{A_k})\sharp(\ii{A_j}{A_j})\sharp(\aa{A_j}{A_i})^{\circ}$
\end{tabular}\]
all of which are easily seen to be locally confluent.
\item[$n=4$:] The interesting cases are those that amount to the
  application of non-trivial rewrite rules to subterms in which 
  exactly two, three or four distinct term-variables occurr,
  because of the conditions $1\leq i\lt j\leq 4$,
$1\leq i\lt j\lt k\leq 4$ and $1\leq i\lt
  j\lt k\lt l\leq 4$. The interesting 
subterms of length two and three that satisfy the first and the
  second set of conditions, respectively, have
  been previously considered. There remain the subterms of length
  three satisfying the third set of conditions. With respect to these,
the interesting cases are 
\[\begin{tabular}{c c}
$(\aa{A_k}{A_l})\sharp(\aa{A_j}{A_k})\sharp(\aa{A_i}{A_j})$ &
$(\aa{A_k}{A_l})\sharp(\aa{A_j}{A_k})\sharp(\ii{A_i}{A_j})$\\
$(\ee{A_k}{A_l})\sharp(\aa{A_j}{A_k})\sharp(\aa{A_i}{A_j})$ & 
$(\ee{A_l}{A_k})^{\circ}\sharp(\aa{A_j}{A_k})\sharp(\aa{A_i}{A_j})$\\
$(\ee{A_l}{A_k})^{\circ}\sharp(\aa{A_j}{A_k})\sharp(\ii{A_i}{A_j})$ &
$(\aa{A_l}{A_k})^{\circ}\sharp(\ee{A_j}{A_k})\sharp(\aa{A_i}{A_j})$\\
$(\aa{A_l}{A_k})^{\circ}\sharp(\ee{A_j}{A_k})\sharp(\ii{A_i}{A_j})$ &
$(\ii{A_k}{A_l})\sharp(\aa{A_k}{A_j})^{\circ}\sharp(\ee{A_i}{A_j})$\\
$(\ii{A_k}{A_l})\sharp(\aa{A_k}{A_j})^{\circ}\sharp(\oo{A_i}{A_j})$ &
$(\ee{A_l}{A_k})^{\circ}\sharp(\ii{A_j}{A_k})\sharp(\aa{A_j}{A_i})^{\circ}$\\
$(\oo{A_k}{A_l})\sharp(\aa{A_k}{A_j})^{\circ}\sharp(\oo{A_i}{A_j})$ &
$(\oo{A_k}{A_l})\sharp(\aa{A_k}{A_j})^{\circ}\sharp(\ee{A_i}{A_j})$\\
$(\aa{A_l}{A_k})^{\circ}\sharp(\ee{A_k}{A_j})^{\circ}\sharp(\ii{A_j}{A_i})^{\circ}$&
$(\ii{A_l}{A_k})^{\circ}\sharp(\aa{A_k}{A_j})^{\circ}\sharp(\ee{A_j}{A_i})^{\circ}$\\
$(\aa{A_l}{A_k})^{\circ}\sharp(\ee{A_k}{A_j})^{\circ}\sharp(\aa{A_i}{A_j})$&
$(\ii{A_l}{A_k})^{\circ}\sharp(\aa{A_k}{A_j})^{\circ}\sharp(\ee{A_i}{A_j})$\\
$(\ii{A_l}{A_k})^{\circ}\sharp(\aa{A_k}{A_j})^{\circ}\sharp(\oo{A_i}{A_j})$&
$(\ee{A_l}{A_k})^{\circ}\sharp(\ii{A_k}{A_j})^{\circ}\sharp(\aa{A_j}{A_i})^{\circ}$\\
$(\aa{A_l}{A_k})^{\circ}\sharp(\ee{A_j}{A_k})\sharp(\ii{A_j}{A_i})^{\circ}$&
$(\aa{A_l}{A_k})^{\circ}\sharp(\ee{A_k}{A_j})^{\circ}\sharp(\aa{A_i}{A_j})$\\
$(\aa{A_l}{A_k})^{\circ}\sharp(\ee{A_k}{A_j})^{\circ}\sharp(\ii{A_i}{A_j})$&
$(\ii{A_l}{A_k})\sharp(\aa{A_k}{A_j})^{\circ}\sharp(\ee{A_j}{A_i})^{\circ}$\\
$(\oo{A_k}{A_j})\sharp(\aa{A_k}{A_j})^{\circ}\sharp(\ee{A_j}{A_i})$&$(\oo{A_k}{A_l})
\sharp(\aa{A_k}{A_j})^{\circ}\sharp(\ee{A_j}{A_i})^{\circ}$

\end{tabular}\]
all of which can be easily seen to be locally confluent up to renaming
of term-variables, as for example in the case of
$(\oo{A_k}{A_l})\sharp(\aa{A_k}{A_j})^{\circ}\sharp(\ee{A_j}{A_i})^{\circ}$,
which on one hand reduces to $(\oo{A_j}{A_l})\sharp(\ee{A_i}{A_j})$
and on the other hand reduces to
$(\oo{A_k}{A_l})\sharp(\ee{A_i}{A_k})$.\\
The interesting subterms of length four that satisfy the condition $1\leq i\lt j\lt
k\lt l\leq 4$ must contain a premise of the form $(\ii{A_i}{A_i})$,
$(\ii{A_j}{A_j})$, $(\ii{A_k}{A_k})$ or $(\ii{A_l}{A_l})$. They fall
inside the already tested cases, by thinking of them as obtained from
the overlapping of three terms of length two.
\item[$n\geq 5$:] The interesting cases fall inside the already
  tested cases, by thinking of them as obtained from the overlapping of
  a suitable amount of terms of length two.
\end{itemize}
\end{proof}
\begin{corollary}
For every positive natural number $n$, $n\geq 2$, the $2$-polygraph for the
calculus of $n$-term sylllogisms is confluent up to renaming
of term-variables. 
\end{corollary}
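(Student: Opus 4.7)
The plan is to invoke Newman's lemma (lemma~\ref{Newman}), which states that any terminating and locally confluent term rewriting system is confluent. Since local confluence up to renaming of term-variables is exactly the content of the preceding theorem~\ref{aupt}, the only missing ingredient is termination, which has already been discussed informally in the paragraph preceding theorem~\ref{aupt}. So the proof reduces to assembling these two facts.

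First I would record the termination argument more explicitly: the function $f\colon\cate{S}_1^*\arr\cateb{N}$ sending each term to its length provides an embedding of the reduction relation $\models$ (restricted to non-trivial rewrite rules other than the laws of simple conversion) into the strict order $\gt$ on $\cateb{N}$, since every such non-trivial rewrite rule strictly decreases the number of Aristotelian diagrams in a term. The laws of simple conversion $(\ee{A_j}{A_i})^{\circ}\models(\ee{A_i}{A_j})$ and $(\ii{A_j}{A_i})^{\circ}\models(\ii{A_i}{A_j})$ do not decrease length, but they can be applied at most once to a given premise, since their target no longer matches the source pattern of any such rule; hence they cannot contribute to an infinite descending chain either. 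Together, this rules out infinite chains $t_1\models t_2\models t_3\models\cdots$, so \cate{S} is terminating.

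Next I would combine this with theorem~\ref{aupt}: for $n\geq 2$, the $2$-polygraph \cate{S} is locally confluent up to renaming of term-variables, so by Newman's lemma it is confluent up to renaming of term-variables, which is the thesis. The only subtlety, and what I would flag as the main point to be careful about, is the phrase \emph{up to renaming of term-variables}: one must verify that Newman's lemma still applies when local confluence is stated modulo an equivalence relation on terms. This is routine provided the equivalence (renaming of term-variables) is respected by the reduction relation, which it manifestly is, since the rewrite rules in $\cate{S}_2$ are schematic in the indices $i,j,k,l$; so the quotient rewriting system is again terminating and locally confluent in the strict sense, and Newman's lemma applies there. No genuine obstacle arises beyond this bookkeeping.
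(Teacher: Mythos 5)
Your proposal is correct and follows exactly the paper's own argument: the corollary is obtained by combining theorem~\ref{aupt} (local confluence) with Newman's lemma~\ref{Newman}, using the termination argument already given before the theorem. The extra care you take with termination of the simple-conversion rules and with Newman's lemma modulo renaming of term-variables is sound bookkeeping that the paper leaves implicit.
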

\begin{proof}
It follows from theorem~\ref{aupt} and Newman's lemma~\ref{Newman}.
\end{proof}
\bibliographystyle{plain}
\bibliography{BiblioTeX}

\begin{thebibliography}{10}

\bibitem{MR1629216}
Franz Baader and Tobias Nipkow.
\newblock {\em Term rewriting and all that}.
\newblock Cambridge University Press, Cambridge, 1998.

\bibitem{MR1224519}
Albert Burroni.
\newblock Higher-dimensional word problems with applications to equational
  logic.
\newblock {\em Theoret. Comput. Sci.}, 115(1):43--62, 1993.
\newblock 4th Summer Conference on Category Theory and Computer Science (Paris,
  1991).

\bibitem{MR2254890}
Yves Guiraud.
\newblock Termination orders for three-dimensional rewriting.
\newblock {\em J. Pure Appl. Algebra}, 207(2):341--371, 2006.

\bibitem{GuiraudMalbos}
Yves Guiraud and Philippe Malbos.
\newblock Higher-dimensional categories with finite derivation type.
\newblock {\em Theory Appl. Categ.}, 22:No. 18, 420--478 (electronic), 2009.

\bibitem{MR1271697}
Marie La~Palme~Reyes, John Macnamara, and Gonzalo~E. Reyes.
\newblock Functoriality and grammatical role in syllogisms.
\newblock {\em Notre Dame J. Formal Logic}, 35(1):41--66, 1994.

\bibitem{Lukasiewicz}
J.~{\L}ukasiewicz.
\newblock {\em Aristotle's syllogistic from the standpoint of modern formal
  logic}.
\newblock Oxford, at the Clarendon Press, 1951.

\bibitem{MR1712872}
S.~Mac~Lane.
\newblock {\em Categories for the working mathematician}, volume~5 of {\em
  Graduate Texts in Mathematics}.
\newblock Springer-Verlag, New York, second edition, 1998.

\bibitem{Meredith}
C.A. Meredith.
\newblock The figures and moods of the $n$-term aristotelian syllogism.
\newblock {\em Dominican Studies}, 6:42--47, 1953.

\bibitem{Pagnan1}
R.~Pagnan.
\newblock A diagrammatic calculus of syllogisms.
\newblock {\em arXiv:1001.1707v2[math LO]}, pages 1--17, 2010.

\bibitem{MR0037255}
W.~Van~Orman Quine.
\newblock {\em Methods of {L}ogic}.
\newblock Henry Holt \& Company, New York, N. Y., 1950.

\bibitem{Smyth}
M.B. Smyth.
\newblock A diagrammatic treatment of syllogistic.
\newblock {\em Notre Dame Journal of Formal Logic}, 12(4):483--488, 1971.

\end{thebibliography}
\end{document}